\documentclass[11pt]{article}
\usepackage[utf8]{inputenc}
\usepackage[T1]{fontenc}

\usepackage{cite}
\usepackage{floatrow}
\usepackage{amsthm}
\usepackage{amsmath}
\usepackage{tikz}
\usepackage{mathrsfs} 
\usepackage{fullpage}
\usepackage{thmtools}
\usepackage{thm-restate}
\usepackage{mathtools}
\usepackage{amssymb}
\usepackage{lmodern, comment}
\usepackage{graphicx}
\usepackage{pgf,tikz,tkz-graph,subcaption}
\usetikzlibrary{arrows,shapes}
\usetikzlibrary{decorations.pathreplacing}
\usepackage{url}
\usepackage[capitalise]{cleveref}
\usepackage[makeroom]{cancel}
\usepackage[normalem]{ulem}

\newtheorem{theorem}{Theorem}

\newtheorem{corollary}[theorem]{Corollary}
\newtheorem{lemma}[theorem]{Lemma}

\newtheorem{proposition}[theorem]{Proposition}

\newcommand*{\abs}[1]{\lvert #1\rvert}

\begin{document}

\title{Benjamini--Schramm convergence and subtrees of trees}
\author{
 Stijn Cambie \thanks{Department of Computer Science, KU Leuven Campus Kulak-Kortrijk, 8500 Kortrijk, Belgium. 
 Supported by a postdoctoral fellowship by the Research Foundation Flanders (FWO) with grant number 1225224N. E-mail: {\tt stijn.cambie@hotmail.com}}
 \and 
Stephan Wagner\thanks{Institute of Discrete Mathematics, TU Graz, Austria and Department of Mathematics, Uppsala University, Uppsala, Sweden, supported by the Swedish research council (VR), grant  2022-04030. E-mail: {\tt stephan.wagner@tugraz.at}}
\and Ruoyu Wang
\thanks{Department of Mathematics, Uppsala University, Uppsala, Sweden, supported by the Swedish research council (VR), grant  2022-04030. E-mail: {\tt ruoyu.wang@math.uu.se}}
}		

\maketitle

\begin{abstract}
In this paper, we study the asymptotic behaviour of the number of subtrees and the subtree density for a sequence of trees that converges in the Benjamini--Schramm sense. Benjamini--Schramm convergence, also called local weak convergence, describes the local behaviour of a sequence of graphs. Here we show that for a  Benjamini--Schramm-convergent sequence of trees, the subtree entropy, i.e., the logarithm of the number of subtrees divided by the order, converges to a constant depending only on the limit. The same holds true for the subtree density, i.e., the probability of a uniformly random vertex being contained in a uniformly random subtree, provided that long paths are ruled out in the limit. Related to this, we show that the subtree density and the average subtree entropy are dense in different parts of the unit interval $[0,1]$ for both general trees and series-reduced trees.
\end{abstract}

\section{Introduction}

In this paper, we study the behaviour of two quantities associated with subtrees for a Benjamini--Schramm convergent sequence of finite trees. Let us start by defining the necessary notions.

All graphs we consider in this work are undirected, connected and locally finite (i.e., all vertices have finite degrees). A sequence of finite graphs $G_n$ is \emph{locally weakly convergent} if for any positive integer $r$ and finite rooted graph $H$, the probability $\sigma_n(H,r)$ of $H$ being isomorphic (as a rooted graph) to the (closed) $r$-ball centred at a uniformly random vertex of $G_n$ converges to a limit $\sigma(H,r)$ such that $\sigma(\cdot,r)$ is a probability measure for every $r$. Intuitively, local weak convergence of graphs means the convergence of sampling statistics in any finite ``radius of sight''.

Local weak convergence is also called \emph{Benjamini--Schramm convergence} after the influential paper of Benjamini and Schramm~\cite{BS01}. The asymptotic behaviour of various graph parameters under this notion of convergence has been studied. In particular, Lyons~\cite{Lyons05} showed that the tree entropy, i.e., the logarithm of the number of spanning trees divided by the order of the graph, converges for Benjamini--Schramm convergent sequences. Abért, Csikvári, Frenkel, and Kun \cite{ACFK16} showed an analogous result for the number of matchings instead of the number of spanning trees. A graph parameter $P$ with the property that $P(G_n)$ converges for all Benjamini--Schramm convergent sparse graph sequences $G_n$ in a certain class of graphs are also called \emph{estimable} \cite{ACFK16}. See \cite{ACH15,AH15,ATV13,CF16} for further examples, especially connected to roots of graph polynomials.  

We will see that similar results also hold for quantities associated with subtrees of trees. Here, a subtree is simply any nonempty subgraph of a tree that is again a tree. We let $N(T)$ denote the number of subtrees of a tree $T$. Moreover, let $\mu(T)$ be the average subtree order of $T$, i.e., the average number of vertices in a uniformly random subtree of $T$. Both of these quantities have been widely studied. In particular, there are many results on the maximum and minimum values of the number of subtrees \cite{SzW05,SW15,AWW13,AWW17,KW08,ZZ15} and the average subtree order \cite{jamison1983average,CWW21,MO19,H14,VW10} in different classes of trees. For example, it is known that the star has the greatest number of subtrees among all trees of order $n$ (namely $2^{n-1} + n-1$), while the path has the least number of subtrees (namely $\binom{n+1}{2}$). The average subtree order is also smallest for paths, with a value of $\frac{n+2}{3}$ (shown by Jamison \cite{jamison1983average}), while the trees that attain the maximum are not known for general $n$. However, it is known that the average subtree order can be close to the trivial upper bound $n$, and there is an intriguing conjecture, also due to Jamison, that the trees with the greatest average subtree order, given the number of vertices, are always caterpillars. See \cite{CWW21} for more on this topic.

Moreover, limit theorems have been established for both quantities under different random tree models \cite{CJ18,W15,RW19}. Our work presents a new viewpoint by studying asymptotic behaviour in a deterministic setting.

For both the number of subtrees and the average subtree order, we will prove convergence of suitably normalised versions. Our first main result is a natural analogue of the aforementioned results of Lyons on the number of spanning trees, and of Abért, Csikvári, Frenkel, and Kun on the number of matchings.
Specifically, we have the following theorem.

\begin{theorem} \label{thm:BSconstant_general}
    Let $T_n$ be a sequence of finite trees such that $|T_n| \to \infty$. If $T_n$ is Benjamini--Schramm convergent,
    then $\frac{\log N(T_n)}{\abs{T_n}}$ converges to a constant depending solely on the Benjamini--Schramm limit.
\end{theorem}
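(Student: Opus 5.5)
We will express $\log N(T)$ as a sum over vertices of quantities that essentially depend only on a bounded neighbourhood of each vertex, and then use Benjamini--Schramm convergence to pass to the limit.

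\textbf{Step 1: a telescoping decomposition.} Order the vertices of $T=T_n$ as $v_1,\dots,v_m$ so that each prefix $\{v_1,\dots,v_k\}$ induces a subtree $T^{(k)}$; for example, use a breadth-first order. Then
\[
\log N(T)=\sum_{k=1}^{m}\log\frac{N(T^{(k)})}{N(T^{(k-1)})},
\]
with the convention $N(T^{(0)})=1$. Relying on a specific ordering is awkward for locality, so we prefer a formulation via subtrees containing a fixed vertex. For a vertex $v$, let $f(v)$ be the number of subtrees of $T$ containing $v$. The quotient $N(T^{(k)})/N(T^{(k-1)})=1+f_{T^{(k)}}(v_k)/N(T^{(k-1)})$ is not local either. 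The cleaner route is a ``pruning'' recursion. Root $T$ anywhere. For each vertex $u$ let $a(u)$ be the number of subtrees of the branch $T_u$ containing $u$; then $a(u)=\prod_{w\text{ child of }u}(1+a(w))$. This is multiplicative, but it depends on the whole branch, so again it is not local.

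\textbf{Step 2: an interpolation / monotone-ratio argument (main approach).} The key inequality we aim for is a bounded-difference property under local modifications. Deleting an edge $e$ splits $T$ into $T_1,T_2$, and
\[
N(T_1)+N(T_2)\le N(T)\le N(T_1)+N(T_2)+N(T_1)N(T_2),
\]
so $\log N(T)=\log N(T_1)+\log N(T_2)+O(1)$, with the $O(1)$ at most $\log 3$, provided each part has at least one vertex. More usefully, we want a locality estimate: the ratio $\rho_T(v)=N(T)/N(T-v)$ for a leaf $v$ should be determined up to error $\varepsilon(r)$ by the $r$-ball around $v$. Indeed $\rho_T(v)=1+\Pr[\text{a uniform subtree of }T-v\text{ contains the neighbour }u]\cdot(\text{correction})$, and these inclusion probabilities of a uniform subtree behave like a Gibbs measure. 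We would prove correlation decay: conditioning on what happens beyond distance $r$ changes the probability that $u$ lies in the subtree by at most $\varepsilon(r)\to0$ uniformly over all trees. To do so, write the probability via the tree recursion $p\mapsto$ ratios $x_w=a(w)$, i.e.\ $x_u=\prod(1+x_w)$. Transformed to $y=x/(1+x)\in[0,1)$, this map is contracting except along long paths where the degrees are $2$. This is the step we expect to be the main obstacle: along long paths there is no contraction, since a path behaves like a polynomial rather than an exponential count. We would handle it by noting that contributions from long bare paths are themselves local. On a path the relevant ratio tends to $1$ slowly, but its logarithm contributes $o(1)$ per vertex, giving only a sublinear total error.

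\textbf{Step 3: limit.} Peel the tree leaf by leaf, choosing the removal order randomly, or use the edge-splitting estimate to cut $T_n$ into pieces of bounded size with $o(|T_n|)$ cut edges. Such cuts are unavailable in general, but a cleaner option is to cut every edge with small probability $\delta$, which costs $O(\delta|T_n|)$. The resulting components have sizes determined locally with high probability. Hence $\frac{1}{|T_n|}\log N(T_n)=\mathbb{E}_v[F_r(B_r(v))]+O(\delta)+\varepsilon(r)$ for a bounded function $F_r$ of rooted $r$-balls, where $F_r(B)$ is the expected value of $\frac{\log N(C)}{|C|}$ over the random component $C$ containing the root, truncated when the component exceeds the ball. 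By Benjamini--Schramm convergence, $\mathbb{E}_v[F_r(B_r(v))]$ converges to $\int F_r\,d\sigma(\cdot,r)$. Letting $n\to\infty$, then $r\to\infty$, then $\delta\to0$ shows the sequence is Cauchy, and the limit depends only on $\sigma$. The delicate part is the tension between $\delta$ and $r$: with cut probability $\delta$, components exceed radius $r$ with probability roughly $(1-\delta)^{r}\cdot(\text{growth})$, which need not be small in high-degree limits. We would therefore truncate degrees, using that the limit is a probability measure so high degrees are rare, and bound the error from high-degree vertices separately via the additive estimate in Step 2.
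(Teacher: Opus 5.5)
\textbf{There is a genuine gap, and it sits exactly where Step 3 needs an estimate.} Step 3 relies on your claim from Step 2 that deleting an edge changes $\log N$ additively by only $O(1)$. Only the upper bound $\log N(T)\le \log N(T_1)+\log N(T_2)+\log 3$ is true. The inequality $N(T_1)+N(T_2)\le N(T)$ gives only $\log N(T)\ge \max(\log N(T_1),\log N(T_2))$.

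Exactly, for $e=u_1u_2$ with $u_i\in T_i$, one has $N(T)=N(T_1)+N(T_2)+N(T_1,u_1)N(T_2,u_2)$. Hence $\log N(T)-\log N(T_1)-\log N(T_2)=\log\bigl(p(T_1,u_1)p(T_2,u_2)+1/N(T_1)+1/N(T_2)\bigr)$, where $p$ is the inclusion probability of a uniform subtree. For $P_{2k}$ cut in the middle this is about $-2\log k$.

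So your assertion that $\delta$-percolation cutting costs $O(\delta|T_n|)$ is unproven. The cost of each cut is governed by the inclusion probabilities of its endpoints inside their clusters. These can be as small as the reciprocal of the cluster size, and they are exactly what is \emph{not} locally determined. In the paper's path versus double broom example, the middle vertex lies in a random subtree with probability about $\frac12$ in one tree and close to $1$ in the other, although the $r$-balls agree.

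That same example refutes the uniform correlation decay you posit in Step 2. You concede the failure along long paths, but ``contributions from long bare paths are themselves local'' is precisely what would need a proof, and none is given.

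Finally, degree truncation does not address the growth issue you raise in Step 3. The canopy tree from the Bethe-tree example has maximum degree $3$ and exponential growth, so a bound of the form $(1-\delta)^r\cdot(\text{growth})$ is useless there. Clusters do turn out to be local in the limit, but for a structural reason specific to limits of finite trees (they have at most two ends, so Bernoulli clusters are a.s.\ finite), not because degrees are small.

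\textbf{The missing idea is an exact vertex decomposition of $\log N(T)$ into bounded terms that are continuous in the local topology.} This is the heart of the paper's proof.
\begin{enumerate}
\item With $c$ a subtree core, Sz\'ekely and Wang's lower bound on $N(T,c)/N(T)$ gives $|\log N(T)-\log(1+N(T,c))|\le\log 2$.
\item The product recursion telescopes to $\log(1+N(T,c))=\sum_{w}\log\bigl(1+1/N_c(T,w)\bigr)$.
\item Lemma~\ref{lemma:N_and_M} identifies $N_c(T,w)$, for $w\ne c$, with $M(T,w)$: the minimum over branches at $w$ of the number of subtrees containing $w$ once that branch is removed. This quantity makes no reference to the non-local core.
\end{enumerate}
Then $f=\log(1+1/M)\in[0,\log 2]$ is continuous on $\mathcal{RT}$. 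Either at most one branch is infinite and $M$ is read off from a large enough ball, or at least two branches are infinite and $M(T_n,v_n)\to\infty$. Weak convergence finishes the proof, with no cutting and no correlation decay. Long paths cause no trouble, because their interior vertices have large $M$ and hence $f$ close to $0$.
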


Our second main result concerns the subtree density, which is a normalised version of the average subtree order: the density $D(T)$ is defined as $D(T) = \mu(T)/|T|$. We will see that $D(T_n)$ also converges for a sequence $T_n$ of trees that converges in the Benjamini--Schramm sense, but only under a technical assumption that rules out long paths. This condition is essential, as there are otherwise counterexamples.

\begin{theorem} \label{main local conv theorem}
    Let $T_n$ be a sequence of finite trees such that $|T_n|\to \infty$ as $n\to \infty$. Suppose that $T_n$ is Benjamini--Schramm convergent, and that the limit measure $\sigma(\cdot,r)$ satisfies
\begin{equation}\label{eq:no_long_paths}
\lim_{r \to \infty} \sigma(P_{2r+1}^*,r) = 0,
\end{equation}
where $P_{2r+1}^*$ is the path with $2r+1$ vertices centred at the middle vertex.
Then $D(T_n)$ converges to a constant depending solely on the Benjamini--Schramm limit.
\end{theorem}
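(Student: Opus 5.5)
The plan is to obtain the subtree density as a derivative of a weighted subtree entropy, and then use convexity to pass to the limit. For a tree $T$ and real $t$, let $N(T;t)=\sum_{S} e^{t\abs{S}}$, where $S$ runs over all subtrees of $T$, and set $F_T(t)=\frac{1}{\abs{T}}\log N(T;t)$. Each $F_T$ is convex in $t$, being the logarithm of a sum of exponentials. Its derivative is
\[
F_T'(t)=\frac{1}{\abs{T}}\cdot\frac{\sum_S \abs{S}e^{t\abs{S}}}{\sum_S e^{t\abs{S}}},
\]
so $F_T'(0)=\mu(T)/\abs{T}=D(T)$.

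\textbf{Step 1: weighted version of Theorem~\ref{thm:BSconstant_general}.} I would first extend Theorem~\ref{thm:BSconstant_general} to vertex weights $e^{t}$ for every $t$ in a neighbourhood of $0$. I expect the same proof to go through: decompose the tree, use the product recursion for rooted subtree counts, and truncate at radius $r$. This gives that $F_{T_n}(t)$ converges pointwise to a limit $F(t)$ that depends only on the Benjamini--Schramm limit.

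\textbf{Step 2: reduce to differentiability of $F$ at $0$.} Convex functions that converge pointwise on an open interval have derivatives converging at every point where the limit is differentiable. Hence $D(T_n)=F_{T_n}'(0)\to F'(0)$ as soon as $F$ is differentiable at $0$. Everything therefore reduces to proving differentiability at $0$, and this is where condition \eqref{eq:no_long_paths} must enter. For paths it fails: there $N(P_n;t)$ is dominated by a few huge subtrees, and the left and right derivatives of the limit differ.

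\textbf{Step 3 (main obstacle): prove differentiability via a local approximation of $D$.} To prove differentiability at $0$, I would show that $F_{T_n}'(t)$ is uniformly approximable by local statistics for $t$ near $0$. Write $D(T)=\frac{1}{\abs{T}}\sum_v \Pr(v\in S)$, where $S$ is a weighted random subtree. Root $T$ at a vertex $u$ and condition on $u\in S$. For a child $c$ of $p$, each branch is then included independently with probability $f_t(c)/(1+f_t(c))$, where
\[
f_t(c)=e^{t}\prod_{c'}\bigl(1+f_t(c')\bigr)
\]
and the product runs over the children $c'$ of $c$. In terms of $x_c=1/(1+f_t(c))$, this recursion is a contraction whenever a vertex has at least two children. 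Consequently $x_c$ is determined up to error $\varepsilon$ by the $r$-ball around $c$, except when the descending structure from $c$ is a long bare path.

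\textbf{Step 4: handle the long paths and assemble.} Condition \eqref{eq:no_long_paths} says that only a vanishing fraction of vertices lie in the middle of such paths of length $2r+1$. Those vertices contribute $o(1)$ to $D$, uniformly in $n$ and in $t$ near $0$. A secondary point is that the random subtree is typically of linear size, so the choice of the conditioning vertex $u$ does not matter. Combining these estimates gives, for all $t$ in a neighbourhood of $0$,
\[
F_{T_n}'(t)=\Phi_r(t,\sigma_n(\cdot,r))+o_r(1),
\]
where $\Phi_r$ is continuous in its arguments. Letting $n\to\infty$ and then $r\to\infty$ shows that $\lim_n F_{T_n}'(t)$ exists and is continuous in $t$. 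It must then equal $F'(t)$, so $F$ is differentiable at $0$ and $D(T_n)\to F'(0)$. This limit depends only on $\sigma$.

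The hardest step is the uniform contraction estimate in Step 3 together with the long-path bookkeeping in Step 4.
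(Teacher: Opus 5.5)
\textbf{Step 1 is false for $t<0$, and Step 2 cannot work without it.} The convexity lemma gives $F_{T_n}'(0)\to F'(0)$ only if $F_{T_n}$ converges at points on both sides of $0$. From $t\ge 0$ alone you get just $\limsup D(T_n)\le F'_+(0)$. (For $t\ge 0$ your weighted version of Theorem~\ref{thm:BSconstant_general} is fine: when $e^t\ge 1$ the weighted counts still increase towards a unique core.)

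For $t<0$, however, $\frac{1}{|T|}\log N(T;t)$ is not determined by the Benjamini--Schramm limit, even under \eqref{eq:no_long_paths}. Fix $t<0$ close to $0$ and choose $k$ with $e^{kt}(1+e^t)<1$. Take two combs $B,B'$ with $n$ spine vertices, so that $\log N(B;t)\sim n\log(e^t(1+e^t))$ grows linearly. Take also a path $S$ on $\alpha n$ vertices carrying a pendant leaf at every $k$-th vertex. Build one tree by gluing them in series as $B$--$S$--$B'$. Build another by joining $B$ directly to $B'$ and hanging $S$ off $B'$.

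\begin{itemize}
\item The two trees agree locally away from $O(1)$ gluing points, so they have the same limit. That limit has no bare paths longer than $k$, so it satisfies \eqref{eq:no_long_paths}.
\item In the series tree, every subtree meeting both $B$ and $B'$ carries the factor $(e^{kt}(1+e^t))^{\alpha n/k}=e^{-\Theta(n)}$.
\item Every other subtree of the series tree forgoes a factor of order $N(B;t)$ or $N(B';t)$, which is $e^{\Theta(n)}$.
\end{itemize}

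Hence the normalised weighted entropy of the series tree is smaller by a positive constant. Alternating the two constructions gives a sequence satisfying every hypothesis of the theorem along which $F_{T_n}(t)$ diverges. Mixing in sparse combs of all spacings $k$, with summable masses, makes this happen for all small $t<0$ simultaneously, so no neighbourhood of $0$ works. The same example shows that $F'_{T_n}(t)$ is not approximable by local statistics for $t<0$, so the uniform-in-$t$ claim in Step 4 also fails.

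\textbf{What survives, and what it still lacks.} What survives is Steps 3--4 at $t=0$ alone. Those would prove the theorem by themselves, since the existence of $\lim_n F'_{T_n}(0)=\lim_n D(T_n)$ \emph{is} the statement. So the convexity detour buys nothing, and Steps 3--4 are exactly where the decisive ideas are missing.

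Your recursion for $x_c$ presupposes knowing which neighbour of $c$ is its parent, i.e.\ the direction towards the conditioning vertex $u$. For a general $u$ this is global information. The remark that ``the random subtree is typically of linear size'' does not supply it. What the paper uses is a specific vertex that lies in almost every subtree and whose direction is locally readable, namely the subtree core $c$:
\begin{itemize}
\item It shows $\mathbb{P}(c\notin S)\le |T|2^{-L(T)/2}\to 0$, using the linear number of leaves forced by \eqref{eq:no_long_paths} (Corollary~\ref{cor:many_leaves}).
\item It uses Lemma~\ref{lemma:N_and_M}: the core lies in the branch of maximal count. In the limit object, this is the branch towards the unique infinite component.
\item This is what makes the factors $q_i^*$ continuous (Lemma~\ref{lem:cont_qi}).
\end{itemize}

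Finally, your exceptional set is too small. A vertex in a small bush at the end of a long bare path is not itself in such a path, yet its inclusion probability depends on the path's length. The paper collects all such vertices into maximal non-$m$-good fringe subtrees and bounds their total size by Lemma~\ref{lem:many_path-nbhds}. On the remaining vertices it proves $1-q_i^*\le 2^{-Ci}$, and that bound is what justifies truncating the product at a fixed depth.
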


The technical condition~\eqref{eq:no_long_paths} forbids long paths: in the limit measure, the probability that the $r$-ball centred at a random vertex is the path $P_{2r+1}^*$ tends to $0$ as $r \to \infty$. As we will see (by exhibiting explicit counterexamples), this technical condition is not just an artifact of our proof, but is actually required. \cref{main local conv theorem} is proved in Section~\ref{sec:density} along the same lines as \cref{thm:BSconstant_general}, but we will need a few more auxiliary lemmas to establish it. 

Let us briefly describe the strategy to prove these theorems. 
Let $\mathcal{RG}$ be the set of  undirected, connected and locally finite graphs with a distinguished vertex called the root. Similarly, let $\mathcal{RT} \subset \mathcal{RG}$ be the set of undirected locally finite rooted trees. The \emph{rooted distance} (see e.g.~\cite{BS01,ACFK16}) of two rooted graphs is defined to be $2^{-k},$ where $k$ is the maximal integer such that the two $k$-balls centred at the two roots are isomorphic as rooted graphs (if there is no such $k$, the distance is $0$). The rooted distance turns $\mathcal{RG}$ into a complete metric space. 

The Benjamini--Schramm convergence of a sequence of finite graphs $G_n$ can be defined as the weak convergence of corresponding measures $\sigma_n$ on $\mathcal{RG}$ obtained by choosing a (uniformly) random root (see~\cite{ATV13}, Section 2.2), i.e.,
\begin{displaymath}
    G_n \xrightarrow[]{\text{local}} G \iff \sigma_n \xrightarrow[]{\text{weak}} \sigma.
\end{displaymath}

To obtain \cref{thm:BSconstant_general}, we  proceed as follows. We first establish a well-defined function $f:\mathcal{RT} \to \mathbb{R}$ such that 
$$\frac{1}{\abs{T_n}} \sum_{w \in T_n} f(T_n,w)$$
has the same limit (if it exists) as $\frac{\log N(T_n)}{\abs{T_n}}$ if $\abs{T_n} \to \infty$. This expression can be interpreted as the expected value $\mathbb{E}_{\sigma_n}(f)$ of $f$ with respect to the measure $\sigma_{n}$ on $\mathcal{RT}$ obtained by rooting $T_n$ at a random root. By the definition of weak convergence of measures, for continuous and bounded~$f$, we have $\mathbb{E}_{\sigma_n}(f) \to \mathbb{E}_{\sigma} (f)$. Since the limit $\mathbb{E}_{\sigma} (f)$ only depends on $\sigma,$ the Benjamini--Schramm limit of $T_n,$ \cref{thm:BSconstant_general} is proved by showing that $f$ is continuous and bounded. This is done in Section~\ref{sec:number}.
The proof of Theorem~\ref{main local conv theorem} follows the same lines, with a few added technicalities, and is presented in Section~\ref{sec:density}. The paper concludes in Section~\ref{sec:values} with a discussion of the possible values that the quantities in our two main theorems can take.

\section{Preliminaries}

We start with some preliminary considerations that will be needed for the proofs of both of our main theorems. Let $T$ be a finite tree.
We write $\mathcal{S} (T)$ for the set of subtrees of $T$ and $\mathcal{S} (T,v)$ for the set of subtrees of $T$ that contain $v$. Then $N(T)=\abs{\mathcal{S}(T)}$ is the number of subtrees of $T$, and we also write $N(T,v)=\abs{\mathcal{S}(T,v)}$ for the number of subtrees that contain $v$. This quantity can be computed in a recursive fashion, which will be important for us. Let $v_1, \dots,v_d$ be the neighbours of $v$, and let $T_1, \dots,T_d$ be the corresponding \emph{branches} (the components of $T-v$, rooted at the neighbours of $v$; we will also refer to these as the \emph{branches of} $v$ in the following). 

There is a trivial bijection between subtrees in $\mathcal{S}(T,v)$ and $d$-tuples of (possibly empty) subtrees in the Cartesian product  $\bigotimes_{i=1}^d  (\{\emptyset\} \cup \mathcal{S}(T_i, v_i))$. Thus we have
\begin{equation}\label{subtree_recursion}
    N(T,v) = \prod_{i = 1}^{d} (1+N(T_i,v_i)).
\end{equation}
A \emph{subtree core} of $T$, denoted by $c$ throughout this paper, is a vertex $v$ that maximizes $N(T, v)$. A finite tree can have one or two subtree cores, as shown by Sz\'ekely and Wang~\cite{BS01}. If there are two cores, they must be adjacent. The subtree core of a tree will often serve as a reference point for us in our proofs.

Note that the notion of a subtree core is no longer well-defined for a tree of infinite order. To sidestep this issue, we use a characterisation of the subtree core that is given in the following lemma. Let $\mathcal{S}_v (T,w)$ be the set of subtrees $S$ of $T$ that contain $w$ and ``grow away'' from vertex $v$, i.e., for all $u \in S$, the unique path $uv$ contains $w$. We finally set $N_v (T,w) = \abs{\mathcal{S}_v (T,w)}$.

Moreover, let us define $M(T,v):= \min_{e \ni v} N(T-e,v),$ where the minimum is taken over all edges $e$ incident with $v$. In words, this is the minimum number of subtrees of $T$ containing $v$ after removing one of the branches at $v$. Observe that this is meaningful for arbitrary locally finite trees (though the minimum might be infinite). The following lemma shows that $M(T,v)$ and $N_c(T,v)$ coincide for any finite tree $T$ unless $v = c$. In other words, the edge $e$ for which the minimum in the definition of $M(T,v)$ is attained is precisely the first edge on the path from $v$ to the subtree core $c$.
 
\begin{lemma}\label{lemma:N_and_M}
    Let $T$ be a finite tree and $c$ be a vertex of the subtree core. Let $v \neq c$ be a vertex of $T$. For a neighbour $u$ of $v$, let $T_u$ be the component of $u$ in $T - uv$. Suppose that $w$ is a neighbour of $v$ such that
    $$N(T_w,w) = \max_{u \in N(v)} N(T_u,u).   
    $$
Then $w$ lies in the component of $T - v$ that contains $c$. In particular, $N_c(T,v)=M(T,v).$
\end{lemma}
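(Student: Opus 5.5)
We argue by contradiction. Let $x$ be the neighbour of $v$ lying in the component of $T-v$ that contains $c$. Suppose that some neighbour $w\neq x$ of $v$ has $N(T_w,w)\ge N(T_u,u)$ for all $u\in N(v)$; in particular $N(T_w,w)\ge N(T_x,x)$. We will show that then $N(T,v)>N(T,c)$, which contradicts the choice of $c$ as a maximiser of $N(T,\cdot)$. The tool throughout is the recursion~\eqref{subtree_recursion}, applied across single edges.

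First we compare $N(T,v)$ with $N(T,x)$. Write $A=N(T_x,x)$ for the number of subtrees containing $x$ that stay on $x$'s side of the edge $vx$. Let $B$ be the number of subtrees containing $v$ on $v$'s side, i.e.\ $B=N(T-T_x,v)=\prod_{u\in N(v),u\ne x}(1+N(T_u,u))$. By~\eqref{subtree_recursion} applied at the edge $vx$,
\[N(T,v)=B(1+A),\qquad N(T,x)=A(1+B).\]
Hence $N(T,v)-N(T,x)=B-A$. Since $w\ne x$ is a neighbour of $v$, the product defining $B$ contains the factor $1+N(T_w,w)$. Therefore $B\ge 1+N(T_w,w)>N(T_w,w)\ge N(T_x,x)=A$, and so $N(T,v)>N(T,x)$.

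If $x=c$, this already contradicts maximality. Otherwise we continue along the path $v=x_0,x_1=x,x_2,\dots,x_k=c$. By induction we show that $N(T,\cdot)$ strictly increases as we move towards $v$ along each edge, i.e.\ $N(T,x_{i})>N(T,x_{i+1})$. For the edge $x_ix_{i+1}$, let $A_i$ be the subtree count for $x_{i+1}$'s side rooted at $x_{i+1}$, and $B_i$ the count for $x_i$'s side rooted at $x_i$. The same identity gives $N(T,x_i)-N(T,x_{i+1})=B_i-A_i$. Now $x_i$'s side contains $x_{i-1}$'s side as a branch, so $B_i\ge 1+B_{i-1}$. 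Also $A_{i-1}\ge A_i$, because $x_{i+1}$'s side is a branch of $x_i$ inside $x_i$'s side at the previous edge, and the recursion gives $A_{i-1}\ge 1+A_i$. Hence $B_i-A_i>B_{i-1}-A_{i-1}>0$. Chaining these inequalities gives $N(T,v)>N(T,c)$, a contradiction. So $w=x$, i.e.\ $w$ lies in the component of $T-v$ containing $c$.

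For the final claim, note that $N(T-e,v)$ for $e=vu$ equals $N(T,v)/(1+N(T_u,u))$. This is minimised exactly when $N(T_u,u)$ is maximal, that is, at $u=x$ by what was just shown. Then $N(T-vx,v)$ counts the subtrees containing $v$ avoiding $x$'s side, which are precisely the subtrees in $\mathcal S_c(T,v)$. Hence $M(T,v)=N_c(T,v)$.

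The main obstacle is the bookkeeping in the propagation step: one must check carefully that the branch inequalities $B_i\ge 1+B_{i-1}$ and $A_{i-1}\ge 1+A_i$ hold with the correct orientation along the path. Each is a one-line consequence of~\eqref{subtree_recursion}, since every factor in the product is at least $1$.
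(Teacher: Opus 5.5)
Your proof is correct. It differs from the paper's in one place: how the key inequality is obtained.

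\textbf{The paper's route.} The paper cites the Sz\'ekely--Wang result that $N(T,\cdot)$ is nondecreasing along any path from a leaf to a subtree core. This gives $N(T,x)\ge N(T,v)$ immediately. In your notation that is $A\ge B$, and then $A\ge B\ge 1+N(T_u,u)>N(T_u,u)$ for every other neighbour $u$ of $v$.

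\textbf{Your route.} You do not cite this monotonicity; you prove the instance you need. The identity $N(T,x_i)-N(T,x_{i+1})=B_i-A_i$, together with $B_i\ge 1+B_{i-1}$ and $A_{i-1}\ge 1+A_i$, shows that this difference strictly increases as you move along the path towards $c$. So a strict decrease across the first edge $vx$ would propagate all the way to $c$ and contradict maximality. In contrapositive form, your propagation step is exactly the statement $N(T,x)\ge N(T,v)$ that the paper imports.

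\textbf{Trade-off.} Your version is self-contained and shows the mechanism behind the cited monotonicity, at the cost of some extra bookkeeping along the path. The paper's version is shorter but depends on an external result.

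Both arguments give the strict inequality $N(T_x,x)>N(T_u,u)$ for $u\neq x$, so the maximising neighbour is unique. Your derivation of $M(T,v)=N_c(T,v)$ matches the paper's.
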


\begin{proof}
    Let $c,v$ and $w$ be as in the lemma. The case that $v$ is a leaf is trivial, so it is safe to assume that $v$ has at least two neighbours. Let $v_1,\dots,v_d$ be all the neighbours of $v$, and let $T_1,\dots,T_d$ be the corresponding branches of $v$. Without loss of generality, assume $c \in T_1$, so that $w=v_1,$ as shown in \cref{fig:coremax2}. As proven in~\cite{SzW05}, $N(T,\cdot)$ is increasing along any path from a leaf to a subtree core, so we have $N(T,v_1) \ge N(T,v)$. 
    With $n_i=N(T_i,v_i)$ for $i=1, \dots ,d$, one has, by~\eqref{subtree_recursion},
    \begin{align*}
        N(T,v_1) &= n_1 \left(1+(1+n_2)\ldots(1+n_d) \right),\\
        N(T,v) &= (1+n_1)(1+n_2)\ldots(1+n_d).
    \end{align*}
    Thus $N(T,v_1) \ge N(T,v)$ yields
    \begin{equation*}
        \frac{1+n_1}{n_1} \le \frac{1+(1+n_2)\ldots(1+n_d)}{(1+n_2)\ldots(1+n_d)}
    \end{equation*}
    or equivalently
    \begin{equation*}
        n_1 \ge (1+n_2)\ldots(1+n_d).
    \end{equation*}
    The last inequality implies $n_1 > n_i,$ for $i=2, \dots ,d.$ It immediately follows that the branch containing the subtree core is exactly the branch that maximizes $N(T_i,v_i),$ which in our case is $T_1.$ 
    Noting that
    \begin{equation*}
        N(T - T_i,v) = \prod_{\substack{j=1 \\ j \neq i}}^d (1 + N(T_j,v_j))
    \end{equation*}
    by~\eqref{subtree_recursion}, one finds that $N(T - T_i,v)$ is minimal if and only if $N(T_i,v_i)$ is maximal. Thus in particular, $N_c(T,v)=N(T-T_1,v)=M(T,v).$
\end{proof}

    \begin{figure}[htbp]
        \begin{center}    
            \begin{tikzpicture}     
        	{          
                 \node at (-1.85,-0.25) {$c$};
                \node at (-2.23,0) {$\ldots$};
                \draw[fill] (-1.8,0) circle (0.06); 
                \draw[thick] (-1.8,0)--(-1.45,0);
                \node at (-1.05,0) {$\ldots$};    
                \draw[fill] (0.05,0) circle (0.06);
        	    
                \node at (0.05,-0.28) {$w=v_1$};
                \draw[thick] (0.05,0)--(-0.55,0);
                \draw[thick] (-0.5,0)--(-0.6,0);

                \node[rotate=135] at (-0.15,0.3) {$\ldots$};
                
                \node at (1.5,0.7) {$v_2$};
                \node at (1.5,-0.7) {$v_d$};
                \draw[thick] (1.75,-0.45)--(1.1,0)--(1.75,0.45);

                \draw[thick] (1.1,0)--(0.05,0);
                \draw[fill] (1.1,0) circle (0.06);
                \node at (1.05,0.25) {$v$}; 
                \draw[fill] (1.75,-0.45) circle (0.06);   
                \node[scale=0.8] at (2.2,-0.48) {$\ldots$};
        	  \draw[fill] (1.75,0.45) circle (0.06);
                \node[scale=0.8] at (2.2,0.48) {$\ldots$};
                \node[rotate=90,scale=0.8] at (1.7,0) {$\ldots$};
                \draw[dashed] (2.5,0.1)--(1.75,0.45)--(2.5,0.9);
                \draw[dashed] (2.5,0.1)--(2.5,0.9);
                \node at (2.9,0.48) {$T_2$};
                \draw[dashed] (2.5,-0.1)--(1.75,-0.45)--(2.5,-0.9);
                \draw[dashed] (2.5,-0.1)--(2.5,-0.9);
                \node at (2.9,-0.48) {$T_d$};

                \draw[dashed] (-2.3,0.8)--(0.7,0.8);
                \draw[dashed] (-2.3,-0.8)--(0.7,-0.8);
                \draw[dashed] (0.7,0.8)--(0.7,-0.8);
                \node at (-2.6,0.8) {$T_1$};
                
        	     } 	
            \end{tikzpicture}\\ 
        \end{center} 
        \caption{The subtree core $c$ lies in the branch $T_1$ of $v_1.$}
        \label{fig:coremax2} 
    \end{figure}

\section{Number of subtrees and Benjamini--Schramm convergence}
\label{sec:number}

\subsection{Proof of Theorem~\ref{thm:BSconstant_general}}

We start the proof of our first main theorem with two more auxiliary lemmas.

\begin{lemma}\label{lemma:core_estimate}
    Let $T$ be a tree of order $n$, and let $c$ be a subtree core of $T$. Then
    \begin{displaymath}
    \left| \log (1+N(T,c)) - \log N(T) \right| \leq \log 2.
    \end{displaymath}
\end{lemma}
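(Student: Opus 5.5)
We will show the two-sided bound $\tfrac12 N(T) \le 1+N(T,c) \le 2N(T)$, which is equivalent to the statement after taking logarithms. The upper bound is nearly trivial: $N(T,c) \le N(T)$ because $\mathcal{S}(T,c) \subseteq \mathcal{S}(T)$, and $N(T)\ge 1$, so $1+N(T,c) \le N(T)+1 \le 2N(T)$. Hence $\log(1+N(T,c)) - \log N(T) \le \log 2$.

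For the lower bound we split $\mathcal{S}(T)$ into the subtrees containing $c$ and those avoiding $c$. Let $v_1,\dots,v_d$ be the neighbours of $c$ and $T_1,\dots,T_d$ the branches at $c$, with $n_i = N(T_i,v_i)$. A subtree avoiding $c$ lies entirely inside a single branch $T_i$. We orient everything toward $c$. Every subtree $S$ of $T_i$ has a unique vertex $w$ closest to $c$, and then $S \in \mathcal{S}_c(T,w)$. Therefore
\begin{displaymath}
N(T) - N(T,c) = \sum_{w \neq c} N_c(T,w).
\end{displaymath}

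The key step is to compare $\sum_{w\ne c} N_c(T,w)$ with $N(T,c)$. We use the inequality derived in the proof of \cref{lemma:N_and_M}: for $v\neq c$ whose branch toward $c$ is rooted at $w$, we have $n_w \ge N_c(T,v)$. Here $n_w$ denotes the count in the branch containing $c$, which itself contains $N_c(T,v)$ via subtrees hanging on $v$. A cleaner route is an injection or weighting argument. Every subtree $S$ avoiding $c$, with top vertex $w$ in branch $T_i$, extends uniquely to the subtree $S \cup P(w,c)$, where $P(w,c)$ is the path from $w$ to $c$. This extension contains $c$, but the map is not injective.

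Instead we argue by induction via the recursion. Define $A_i$ to be the number of subtrees of $T_i$, so $N(T) = N(T,c) + \sum_i A_i$. We want to show $\sum_i A_i \le N(T,c)+1$, or at least $N(T) \le 2(1+N(T,c))$.

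Apply \cref{lemma:core_estimate} inductively, but with the core replaced by the root $v_i$. For this we need the general fact that, for $v$ on the path from the core, $A_i \le$ a constant times $n_i$. The core property gives $N(T,v_1)\le N(T,c)$, that is, $n_1(1+\prod_{j\ge2}(1+n_j)) \le \prod_j (1+n_j)$, which says $n_i \le \prod_{j\ne i}(1+n_j)$. So each branch is "dominated" by the rest.

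The plan is to prove by induction on $|T|$ the auxiliary claim $N(T') \le 2N(T',r)$ for rooted trees $T'$ in which $N$ increases toward the root, meaning the root is the max along every root-leaf path. This holds for each branch $T_i$ rooted at $v_i$ by the monotonicity from~\cite{SzW05}. It would give $\sum_i A_i \le 2\sum_i n_i$. It then remains to check $2\sum_i n_i \le \prod_i(1+n_i)+1$ using the domination $n_i \le \prod_{j\neq i}(1+n_j)$. The largest branch term $n_1$ is at most the product of the others, and the remaining terms are small compared with the product. We expect this final elementary inequality, together with making the inductive auxiliary claim precise, to be the main obstacle. The constant 2 may need slack; if the naive bound fails, we would instead refine $N_c(T,w)$ summation along the heavy path toward $c$ as a geometric-type series.
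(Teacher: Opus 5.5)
\textbf{There is a genuine gap: your auxiliary claim is false.} Several pieces of your proposal are fine: the upper bound $1+N(T,c)\le 2N(T)$, the decomposition $N(T)=N(T,c)+\sum_i N(T_i)$, the domination inequality $n_i\le\prod_{j\ne i}(1+n_j)$, and the closing inequality $2\sum_i n_i\le\prod_i(1+n_i)+1$ (which is true and easy). The problem is the claim that each branch at the core satisfies $N(T_i)\le 2N(T_i,v_i)$, and no constant in place of $2$ rescues it.

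Take a star with centre $c$ and $k$ leaves, and attach a path $v_1v_2\cdots v_L$ to $c$ at $v_1$. With $v_0=c$ one gets $N(T,v_j)=(j+2^k)(L+1-j)$, which is non-increasing in $j$ when $L\le 2^k$. Every star leaf lies in $1+2^{k-1}(L+1)<N(T,c)$ subtrees. So $c$ is a subtree core whenever $L\le 2^k$. Yet the path branch has $N(T_1)=\binom{L+1}{2}$ while $N(T_1,v_1)=L$, a ratio $(L+1)/2$ that is unbounded as $L=2^k\to\infty$. Already $k=3$, $L=4$ gives $N(T,c)=40>36=N(T,v_1)$ and $N(T_1)=10>8=2N(T_1,v_1)$.

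The monotonicity you invoke concerns $N(T,\cdot)$ in the whole tree. Here it does hold, with values $36,30,22,12$ along the path, but it does not make $v_i$ a core of $T_i$: $N(T_1,\cdot)$ along the path is $4,6,6,4$. So neither your auxiliary claim nor an inductive application of the lemma to $(T_i,v_i)$ is available. A long thin branch really has quadratically many subtrees compared with those through its root. That excess is paid for only by the factor $\prod_{j\ne i}(1+n_j)$ from the other branches, so any per-branch bound linear in $n_i$ must fail.

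\textbf{How the paper handles it, and a repair.} The paper does not prove the lower bound from scratch. It quotes the theorem of Sz\'ekely and Wang that $N(T,c)/N(T)\ge\frac{\lfloor n/2\rfloor+1}{2\lfloor n/2\rfloor+1}>\frac12$, and combines it with the same trivial upper bound you use.

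Your framework can be repaired into a self-contained proof by replacing the linear claim with the quadratic one $N(B)\le\binom{N(B,r)+1}{2}$. This holds for every rooted tree $(B,r)$ with no hypothesis, and paths rooted at an end give equality. For the induction step, suppose the children of $r$ have root counts $m_1,\dots,m_s$, and let $m=\prod_j(1+m_j)=N(B,r)$. You need $\sum_j m_j(m_j+1)\le m(m-1)$, which follows from $m_j+1\le m$ and $\sum_j m_j\le m-1$.

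At the core, let $n_1=\max_j n_j$ and $P=\prod_{j\ge 2}(1+n_j)$. Then $n_1\le P$ and $\sum_{j\ge 2}n_j\le P-1$ give $\sum_j n_j\le 2P-1$. Hence $\sum_j N(T_j)\le\frac12(n_1+1)\sum_j n_j<(1+n_1)P=N(T,c)$, i.e.\ $N(T)<2N(T,c)$. This is stronger than the bound $N(T)\le 2(1+N(T,c))$ you were after.
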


\begin{proof}
    Theorem 6 in \cite{SzW14} (see also \cite[Theorem 5]{SzW13}) states, rewritten in our notation, that
$$\frac{N(T,c)}{N(T)} \geq \frac{\lfloor \frac{n}2 \rfloor + 1 }{2 \lfloor \frac{n}2 \rfloor + 1}.$$
Thus
$$\frac{1 + N(T,c)}{N(T)} > \frac{\lfloor \frac{n}2 \rfloor + 1 }{2 \lfloor \frac{n}2 \rfloor + 1} > \frac12.$$
On the other hand, it is trivial that
$$\frac{1 + N(T,c)}{N(T)} \leq \frac{2N(T,c)}{N(T)} \leq 2.$$
The inequality follows upon taking the logarithm.
\end{proof}

\begin{lemma}\label{lemma:subformula}
    Let $T$ be a finite tree and $v \in T.$ Then
    \begin{displaymath}
    \log \left(1+N(T,v) \right) = \sum_{w \in T} \log \left( 1 + \frac{1}{N_v(T,w)} \right).
    \end{displaymath}
\end{lemma}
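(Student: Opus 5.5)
We prove the identity by induction on the height of $T$ when rooted at $v$, or equivalently on $\abs{T}$, using the recursion~\eqref{subtree_recursion}. The idea is that \eqref{subtree_recursion} relates $N(T,v)$ to the quantities $N(T_i,v_i)$ of the branches. Applying it repeatedly along the tree rooted at $v$ should turn $\log(1+N(T,v))$ into a telescoping sum with one term per vertex.

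First observe that for a vertex $w$ of $T$, the subtrees counted by $N_v(T,w)$ are exactly the subtrees of the branch of $w$ directed away from $v$ that contain $w$. Call this branch $T^{(w)}$, rooted at $w$ and consisting of $w$ and all its descendants when $T$ is rooted at $v$. Then $N_v(T,w)=N(T^{(w)},w)$, and in particular $N_v(T,v)=N(T,v)$. Moreover, for $w$ in a branch $T_i$ of $v$, we have $N_v(T,w)=N_{v_i}(T_i,w)$.

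Now write
\[
\log\Bigl(1+\tfrac{1}{N(T,v)}\Bigr)=\log(1+N(T,v))-\log N(T,v),
\]
and use \eqref{subtree_recursion} to get
\[
\log N(T,v)=\sum_{i=1}^d \log(1+N(T_i,v_i)).
\]
Hence
\[
\log(1+N(T,v)) = \log\Bigl(1+\tfrac{1}{N(T,v)}\Bigr) + \sum_{i=1}^d \log\bigl(1+N(T_i,v_i)\bigr).
\]
Apply the induction hypothesis to each branch $T_i$ with root $v_i$, so that
\[
\log(1+N(T_i,v_i))=\sum_{w\in T_i}\log\Bigl(1+\tfrac{1}{N_{v_i}(T_i,w)}\Bigr).
\]
By the observation above, each term equals $\log(1+1/N_v(T,w))$. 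Since $T$ is the disjoint union of $\{v\}$ and the branches $T_1,\dots,T_d$, summing gives the claim.

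The base case is a single vertex. There $N(T,v)=1$, so both sides equal $\log 2$. The same computation also covers leaves inside the induction, where $d=0$ and the empty product equals $1$.

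There is no real obstacle here. The only point needing care is the identification $N_v(T,w)=N_{v_i}(T_i,w)$: a subtree containing $w$ that grows away from $v$ cannot contain $v$, so it lies entirely inside $T_i$. Within $T_i$, the path from any vertex $u$ to $v_i$ passes through $w$ exactly when the path from $u$ to $v$ does.
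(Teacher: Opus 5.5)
Your proof is correct and is essentially the paper's argument: induction on $|T|$ using \eqref{subtree_recursion} to split $\log(1+N(T,v))$ into $\log\bigl(1+\tfrac{1}{N(T,v)}\bigr)$ plus one term per branch, then applying the hypothesis to each branch via $N_{v_i}(T_i,w)=N_v(T,w)$. The paper uses that identification without comment, whereas you justify it explicitly.
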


\begin{proof}
    We use induction on the number of vertices of $T.$ When $T$ is the singleton tree, both sides equal $\log 2$. For the induction step, we apply~\eqref{subtree_recursion}.
    We have
    \begin{align*}
        \log \left(1+N(T,v) \right) &= \log N(T,v) + \log \frac{1+N(T,v)}{N(T,v)}\\
        &= \log \left(\prod_{i = 1}^{d} (1+N(T_i,v_i)) \right) + \log \left(1+ \frac{1}{N(T,v)} \right) \\
        &= \sum_{i=1}^{d} \bigg( \log (1+N(T_i,v_i)) \bigg) + \log \left(1+ \frac{1}{N(T,v)} \right).
    \end{align*}    
        Now, by the induction hypothesis,
    \begin{align*}
        \log \left(1+N(T,v) \right) &= \sum_{i=1}^{d} \left( \sum_{w \in T_i} \log \left( 1 + \frac{1}{N_{v_i}(T_i,w)} \right) \right) + \log \left(1+ \frac{1}{N(T,v)} \right) \\
        &= \sum_{w \in T-v} \Biggl( \log \left( 1 + \frac{1}{N_{v}(T,w)} \right) \Biggl) + \log \left(1+ \frac{1}{N_v(T,v)} \right) \\
        &= \sum_{w \in T} \log \left( 1 + \frac{1}{N_v(T,w)} \right),
    \end{align*}
    which completes the induction.
\end{proof}

Now consider a sequence $T_n$ of finite trees such that $\abs{T_n}\to \infty$. We let $c_n$ denote the subtree core of $T_n$ (if there are two, we pick either).
\cref{lemma:core_estimate} and \cref{lemma:subformula} together imply that
\begin{equation*}
    \left\lvert \log N(T_n) - \sum_{w \in T_n} \log \left(1 + \frac{1}{N_{c_n}(T_n,w)} \right) \right\rvert \leq \log 2.
\end{equation*}
We have $N_{c_n}(T_n,w) = M(T_n,w)$ for all $w \neq c_n$ by \cref{lemma:N_and_M}. Moreover, it is clear that
\begin{equation*}
    \left\lvert \log \left(1 + \frac{1}{N_{c_n}(T_n,c_n)} \right) - \log \left(1 + \frac{1}{M(T_n,c_n)} \right) \right\rvert \leq \log 2,
\end{equation*}
since both $N_{c_n}(T_n,c_n)$ and $M(T_n,c_n)$ are integers. Thus we have
\begin{equation*}
    \left\lvert \log N(T_n) - \sum_{w \in T_n} \log \left(1 + \frac{1}{M(T_n,w)} \right) \right\rvert \leq 2\log 2.
\end{equation*}
As $n \to \infty$, it now follows that
\begin{equation*}
    \left\lvert \frac{\log N(T_n)}{\abs{T_n}} - \frac{1}{\abs{T_n}} \sum_{w \in T_n} \log \left(1 + \frac{1}{M(T_n,w)} \right) \right\rvert \leq \frac{2\log 2}{\abs{T_n}} \to 0.
\end{equation*}
So the two expressions $\frac{\log N(T_n)}{\abs{T_n}}$ and $\frac{1}{\abs{T_n}} \sum_{w \in T_n} \log \left(1 + \frac{1}{M(T_n,w)} \right)$ converge to the same limit (if it exists).
We have thus established a function
    \begin{align*}
        f : \mathcal{RT} &\to \mathbb{R}, \\
        (T,w) &\mapsto \log \left( 1 + \frac{1}{M(T,w)} \right),
    \end{align*}
with the property that 
\begin{equation}\label{eq:limit_rep}
\lim_{n \to \infty} \frac{1}{\abs{T_n}} \sum_{w \in T_n} f(T_n,w)
= \lim_{n \to \infty} \frac{\log N(T_n)}{\abs{T_n}}
\end{equation}
for a sequence $T_n$ of finite trees such that $\abs{T_n} \to \infty.$, provided that the limit exists. 

Now with respect to the measure $\sigma_n$ on $\mathcal{RT}$ associated with $T_n$, the expression on the left of~\eqref{eq:limit_rep} is exactly the expectation of our function $f$, i.e.,  $\mathbb{E}_{\sigma_n}(f)$. Weak convergence of the measures $\sigma_n$ to a limit $\sigma$ implies that for every bounded, continuous function $g$, one has $\mathbb{E}_{\sigma_n}(g) \to \mathbb{E}_{\sigma}(g)$. Thus, \cref{thm:BSconstant_general} is proved by showing that $f$ is bounded and continuous.

Since $f$ is a real function defined on the metric space $\mathcal{RT}$, continuity of $f$ is equivalent to the condition that for every convergent sequence $x_n \to x$ in $\mathcal{RT}$, we have $f(x_n) \to f(x)$. Therefore, the next lemma yields the continuity of $f$.

\begin{lemma} \label{continuity of f} We have
    $\log (1+\frac{1}{M(T_n,v_n)}) \to \log (1+\frac{1}{ M(T,v)})$ if $(T_n,v_n) \to (T,v)$ in $\mathcal{RT}.$
\end{lemma}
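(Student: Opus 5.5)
The idea is to compare $M(T_n,v_n)$ with the number of subtrees of a finite ball, where everything is determined by the rooted isomorphism type. For a rooted tree $(T,v)$ and $r\ge 1$, write $B_r=B_r(T,v)$ for the closed $r$-ball. For each edge $e\ni v$ define the truncated count $N(B_r-e,v)$, and set $M_r(T,v):=\min_{e\ni v} N(B_r-e,v)$.

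Two observations drive the argument. First, $M_r(T,v)$ depends only on the isomorphism class of the rooted ball $B_r(T,v)$. Second, $N(B_r-e,v)$ is nondecreasing in $r$ and converges to $N(T-e,v)$, possibly $\infty$. The reason is that every finite subtree containing $v$ lies in some ball, so the counts increase to the full count. Because the minimum is over the finitely many edges at $v$ (local finiteness), we get $M_r(T,v)\uparrow M(T,v)$ as $r\to\infty$, with the convention $\log(1+1/\infty)=0$. Since $M\ge 1$ always (the singleton $\{v\}$ is counted), $f$ takes values in $[0,\log 2]$ and is bounded; the lemma only concerns continuity.

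Now let $(T_n,v_n)\to(T,v)$. For each fixed $r$ there is $n_0(r)$ such that $B_r(T_n,v_n)\cong B_r(T,v)$ for $n\ge n_0(r)$, and hence $M_r(T_n,v_n)=M_r(T,v)$. Monotonicity gives, for $n\ge n_0(r)$,
\begin{equation*}
M(T_n,v_n)\ \ge\ M_r(T_n,v_n)\ =\ M_r(T,v).
\end{equation*}
So $\liminf_n M(T_n,v_n)\ge M_r(T,v)$ for every $r$, and letting $r\to\infty$ yields $\liminf_n M(T_n,v_n)\ge M(T,v)$. In the case $M(T,v)=\infty$ this already gives $M(T_n,v_n)\to\infty$, and we are done.

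The main obstacle is the reverse inequality when $M(T,v)=m<\infty$. Here some edge $e\ni v$ satisfies $N(T-e,v)=m$. The component of $v$ in $T-e$ must then be finite, since an infinite locally finite tree containing $v$ has infinitely many subtrees through $v$ (for instance, initial segments of an infinite ray by König's lemma). Every such finite component has radius at most $m$ around $v$, because a path of length $k$ from $v$ already gives $k+1$ subtrees containing $v$. Take $R=m+1$ and $n\ge n_0(R)$. The ball isomorphism maps $e$ to an edge $e_n\ni v_n$, and the component of $v_n$ in $B_R(T_n,v_n)-e_n$ is a copy of the finite component $C$, of radius at most $m<R$, with no boundary vertices at distance $R$. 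No vertex of $C$ at distance less than $R$ can have extra neighbours outside the ball, so this component is exactly the component of $v_n$ in $T_n-e_n$. Therefore
\begin{equation*}
M(T_n,v_n)\ \le\ N(T_n-e_n,v_n)\ =\ m.
\end{equation*}
Combined with the lower bound, $M(T_n,v_n)=M(T,v)$ for all large $n$. In both cases $\log(1+1/M(T_n,v_n))\to\log(1+1/M(T,v))$, by continuity of $x\mapsto\log(1+1/x)$ on $[1,\infty]$.
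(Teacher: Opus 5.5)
Your proof is correct and follows essentially the same route as the paper. Both truncate to the $r$-ball via $M_r$, use eventual isomorphism of balls, and split into the cases $M(T,v)=\infty$ (equivalently, at least two infinite branches at $v$) and $M(T,v)<\infty$, where the finite minimizing component is captured exactly inside a large ball. Your framing as a lower bound $M(T_n,v_n)\ge M_r(T_n,v_n)=M_r(T,v)$ plus an upper bound with the explicit radius $R=m+1$ is slightly cleaner, and it supplies the justification for the step $M(T_n,v_n)=M_r(T_n,v_n)$, which the paper states without comment.
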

\begin{proof}
    For a positive integer $r$, define $M_r(T,v):= M(T|_{v,r},v),$ where $T|_{v,r}$ is the (closed) $r$-ball centred at $v$ in $T.$ Convergence of $(T_n,v_n)$ to $(T,v)$ in $\mathcal{RT}$ implies that for any positive integer $r,$ the $r$-ball of $T_n$ centred at $v_n$ is eventually isomorphic to with the $r$-ball of $T$ centred at $v$ for sufficiently large $n \geq n_r$.
  
    Denote the neighbours of $v$ by $w_1, \ldots,w_d$, and their corresponding branches in $T-v$ by $S_1, \ldots, S_d$. Without loss of generality, assume that $N(S_1,w_1) = \max_j N(S_j,w_j)$. This readily implies that $M(T,v)=N(T-vw_1,v)$ (as in the proof of \cref{lemma:N_and_M}). 
    First we consider the case that at most one of the branches $S_j$ has infinitely many vertices. If there is such a branch, then it must be $T_1$ by our choice. Now for sufficiently large $r$, the $r$-ball $T|_{v,r}$ completely contains all branches except possibly $T_1$, since they are finite. Moreover, if $r$ is chosen sufficiently large, removing $vw_1$ yields the minimum in the definition of $M_r(T,v)$ (again by our choice of $T_1$). Thus we must have $M_r(T,v)=M(T,v)=N(T-vw_1,v)$. For sufficiently large $n$, the $r$-ball of $T_n$ centred at $v_n$ coincides with the $r$-ball of $T$ centred at $v$, so we have
    $$M(T_n,v_n) = M_r(T_n,v_n) = M_r(T,v)=M(T,v).$$
    On the other hand, in the case that at least two of the branches are infinite, removing an edge $vv_j$ only eliminates one of them, so we have $M(T,v)=\infty$. For every fixed $r$, each infinite branch must contain a vertex whose distance from $v$ is $r$. Thus $T|_{v,r}$ contains two vertices in distinct branches whose distance from $v$ is at least $r$. For large enough $n$, the $r$-ball $T_n|_{v_n,r}$ also has this property, which implies that $M(T_n,v_n) \geq r$. Therefore, we must have
        $\lim_{n \to \infty} M(T_n,v_n)=M(T,v)=\infty$, completing the proof also in this case.
\end{proof}

We can now conclude the proof of our first main result.

\begin{proof}[Proof of \cref{thm:BSconstant_general}]
\cref{continuity of f} shows that $f: \mathcal{RT} \to \mathbb{R}$ given by
$$f(T,w) = \log \left( 1 + \frac{1}{M(T,w)} \right)$$
 is a continuous function. Clearly, $f(T,w) \in [0,\log 2]$ for any $(T,w) \in \mathcal{RT}$, so $f$ is also bounded. Since the assumption of Benjamini--Schramm convergence implies weak convergence of the measures $\sigma_n$ associated with $T_n$ to a limit $\sigma$,
$$\mathbb{E}_{\sigma_n}(f) = \frac{1}{\abs{T_n}} \sum_{w \in T_n} f(T_n,w) \to \mathbb{E}_{\sigma}(f)$$
as $n \to \infty$, and the limit only depends on the Benjamini--Schramm limit of the sequence $T_n$. In view of~\eqref{eq:limit_rep}, this completes the proof.
\end{proof}

\subsection{Examples}\label{sec:examples1}

Let us illustrate \cref{thm:BSconstant_general} with three examples.

\paragraph{Paths.}
The simplest example is the sequence of paths $P_n$, where $|P_n|=n$. $P_n$ converges to the (two-sided) infinite path $P_\infty$ in the Benjamini--Schramm sense. More precisely, $P_\infty$ represents the limit measure $\sigma$ with mass concentrated at the infinite path rooted at any vertex $v$. For $P_n,$ $N(P_n)=\frac{n(n+1)}{2}$ and therefore $\lim_{n \to \infty} (\log N(P_n))/n = 0$. On the other hand, $\mathbb{E}_{\sigma}(f) = 0$ since $M(P_{\infty},v) = \infty$ and thus $f(P_{\infty},v) = 0$. \hfill \(\diamondsuit\)

\paragraph{Combs.}
Let $C_n$ denote the tree obtained by attaching a leaf to each vertex of $P_n$. Figure \ref{fig:combs} shows the first three combs $C_1,C_2,C_3$. Note that $|C_n|=2n$. It is not too difficult to prove (see the later discussion in Section~\ref{sec:examples2}) that $N(C_n) = 2^{n+2}-n-4$, thus $\lim_{n \to \infty} \frac{\log N(C_n)}{|C_n|} = \frac{\log 2}{2}$.

On the other hand, let $C_\infty$ be the (two-sided) infinite comb. The Benjamini--Schramm limit of $C_n$ is $C_{\infty}$, rooted either at a leaf or an internal vertex, each with probability $\frac12$. If $v$ is a leaf of $C_{\infty}$, then clearly $M(C_{\infty},v) = 1$ and $f(C_{\infty},v) = \log 2$. On the other hand, if $v$ is an internal vertex of $C_{\infty}$, then $M(C_{\infty},v) = \infty$ and $f(C_{\infty},v) = 0$. Thus $\mathbb{E}_{\sigma}(f) = \frac12 \log 2$ in this case, in agreement with \cref{thm:BSconstant_general}. \hfill \(\diamondsuit\)

\begin{figure}[htbp]
        \begin{center}    
            \begin{tikzpicture}     
        	{                
                \draw[fill] (-6,0) circle (0.06);
                \draw[fill] (-6,1) circle (0.06);
                \draw[thick] (-6,0)--(-6,1);
                
                \draw[fill] (-4,0) circle (0.06);
                \draw[fill] (-3,0) circle (0.06);
                \draw[fill] (-4,1) circle (0.06);
                \draw[fill] (-3,1) circle (0.06);
                \draw[thick] (-4,0)--(-4,1);
                \draw[thick] (-3,0)--(-3,1);
                \draw[thick] (-4,0)--(-3,0);

                \draw[fill] (-1,0) circle (0.06);
                \draw[fill] (0,0) circle (0.06);
                \draw[fill] (1,0) circle (0.06);
                \draw[thick] (-1,0)--(0,0)--(1,0);
                \draw[fill] (-1,1) circle (0.06);
                \draw[fill] (0,1) circle (0.06);
                \draw[fill] (1,1) circle (0.06);
                \draw[thick] (-1,0)--(-1,1);
                \draw[thick] (0,0)--(0,1);
                \draw[thick] (1,0)--(1,1);
           
        	     } 	
            \end{tikzpicture}\\ 
        \end{center} 
        \caption{$C_1,C_2$ and $C_3.$ }
        \label{fig:combs} 
    \end{figure}

\paragraph{Regular trees (Bethe trees).}

As a more complicated example, let us consider the sequence of 3-regular trees $RT_n^3$, shown in Figure~\ref{fig:3regulartree}, and its Benjamini--Schramm limit $RT^3$. Such trees are also known as \emph{Bethe trees}. Subtrees of Bethe trees were studied in \cite{YLW15}. Let $L_n^k$ be the set of vertices in $RT_n^3$ whose distance from the closest leaf is $k$. Since $\abs{L^k_n}=3 \cdot 2^{n-k-1}$ ($0 \leq k \leq n-1$) and $\abs {RT_n^3}= 3 \cdot 2^n - 2$, we have $\lim_{n \to \infty} \frac{\abs{L^k_n}}{\abs{RT_n^3}} = 2^{-k-1}$. Thus the Benjamini--Schramm limit of $RT_n^3$ is the infinite canopy tree $RT^3$, shown in Figure \ref{fig:canopytree} (rather than the infinite $3$-regular tree, as one might intuitively guess), rooted at $x_k$ with probability $2^{-k-1}$.

    \begin{figure}[htbp]
        \begin{center}    
            \begin{tikzpicture}     
        	{          
                
                \draw[fill] (-10,0) circle (0.06);
                \node at (-10,-1.8) {$RT^3_0$};
                
                \draw[fill] (-7.5,0) circle (0.06);
                \node at (-7.5,-1.8) {$RT^3_1$};
                \draw[fill] (-6.9,-0.4) circle (0.06);
                \draw[fill] (-8.1,-0.4) circle (0.06);
                \draw[fill] (-7.5,0.65) circle (0.06);
                \draw[thick] (-6.9,-0.4)--(-7.5,0)--(-8.1,-0.4);
                \draw[thick] (-7.5,0.65)--(-7.5,0);
                \draw[dashed] (-7.5,0.65)--(-8.1,-0.4)--(-6.9,-0.4);
                \draw[dashed] (-7.5,0.65)--(-6.9,-0.4);
                
                \draw[fill] (-4,0) circle (0.06);
                \node at (-4,-1.8) {$RT^3_2$};
                \draw[fill] (-3.4,-0.4) circle (0.06);
                \draw[fill] (-4.6,-0.4) circle (0.06);
                \draw[fill] (-4,0.65) circle (0.06);
                \draw[thick] (-3.4,-0.4)--(-4,0)--(-4.6,-0.4);
                \draw[thick] (-4,0.65)--(-4,0);
                \draw[fill] (-4.4,1) circle (0.06);
                \draw[fill] (-3.6,1) circle (0.06);
                \draw[thick] (-4.4,1)--(-4,0.65)--(-3.6,1);
                \draw[fill] (-5,-0.1) circle (0.06);
                \draw[fill] (-4.8,-0.8) circle (0.06);
                \draw[thick] (-4.8,-0.8)--(-4.6,-0.4)--(-5,-0.1);
                \draw[fill] (-3,-0.1) circle (0.06);
                \draw[fill] (-3.2,-0.8) circle (0.06);
                \draw[thick] (-3.2,-0.8)--(-3.4,-0.4)--(-3,-0.1);
                \draw[dashed] (-3,-0.1)--(-3.2,-0.8)--(-4.8,-0.8);
                \draw[dashed] (-4.8,-0.8)--(-5,-0.1)--(-4.4,1);
                \draw[dashed] (-4.4,1)--(-3.6,1)--(-3,-0.1);
                
                \draw[fill] (0,0) circle (0.06);
                \node at (0,-1.8) {$RT^3_3$};
                \draw[fill] (0.6,-0.4) circle (0.06);
                \draw[fill] (-0.6,-0.4) circle (0.06);
                \draw[fill] (0,0.65) circle (0.06);
                \draw[thick] (0.6,-0.4)--(0,0)--(-0.6,-0.4);
                \draw[thick] (0,0.65)--(0,0);
                \draw[fill] (-0.4,1) circle (0.06);
                \draw[fill] (0.4,1) circle (0.06);
                \draw[thick] (-0.4,1)--(0,0.65)--(0.4,1);
                \draw[fill] (-1,-0.1) circle (0.06);
                \draw[fill] (-0.8,-0.8) circle (0.06);
                \draw[thick] (-0.8,-0.8)--(-0.6,-0.4)--(-1,-0.1);
                \draw[fill] (1,-0.1) circle (0.06);
                \draw[fill] (0.8,-0.8) circle (0.06);
                \draw[thick] (0.8,-0.8)--(0.6,-0.4)--(1,-0.1);
                \draw[fill] (0.8,-1.1) circle (0.06);
                \draw[fill] (1.1,-0.8) circle (0.06);
                \draw[thick] (0.8,-1.1)--(0.8,-0.8)--(1.1,-0.8);
                \draw[fill] (1.1,0.2) circle (0.06);
                \draw[fill] (1.3,-0.2) circle (0.06);
                \draw[thick] (1.1,0.2)--(1,-0.1)--(1.3,-0.2);
                \draw[fill] (-0.4,1.3) circle (0.06);
                \draw[fill] (-0.7,1) circle (0.06);
                \draw[thick] (-0.4,1.3)--(-0.4,1)--(-0.7,1);
                \draw[fill] (0.4,1.3) circle (0.06);
                \draw[fill] (0.7,1) circle (0.06);
                \draw[thick] (0.4,1.3)--(0.4,1)--(0.7,1);
                \draw[fill] (-0.8,-1.1) circle (0.06);
                \draw[fill] (-1.1,-0.8) circle (0.06);
                \draw[thick] (-0.8,-1.1)--(-0.8,-0.8)--(-1.1,-0.8);
                \draw[fill] (-1.1,0.2) circle (0.06);
                \draw[fill] (-1.3,-0.2) circle (0.06);
                \draw[thick] (-1.1,0.2)--(-1,-0.1)--(-1.3,-0.2);
                \draw[dashed] (-1.1,0.2)--(-1.3,-0.2)--(-1.1,-0.8);
                \draw[dashed] (-1.1,-0.8)--(-0.8,-1.1)--(0.8,-1.1);
                \draw[dashed] (0.8,-1.1)--(1.1,-0.8)--(1.3,-0.2);
                \draw[dashed] (1.3,-0.2)--(1.1,0.2)--(0.7,1);
                \draw[dashed] (0.7,1)--(0.4,1.3)--(-0.4,1.3);
                \draw[dashed] (-0.4,1.3)--(-0.7,1)--(-1.1,0.2);

        	     } 	
            \end{tikzpicture}\\ 
        \end{center} 
        \caption{First four trees in the sequence $RT^3_n$, with dashed lines indicating the sets $L^0_n$.}
        \label{fig:3regulartree} 
    \end{figure}

Now set $m_k = M(RT^3,x_k)$. If we remove the edge between $x_k$ and $x_{k+1}$ (the edge to the infinite component), the component of $x_k$ is a complete binary tree $B_k$ of height $k$, rooted at $x_k$: $B_0$ is a single vertex, and $B_k$ is obtained from two copies of $B_{k-1}$ by attaching their roots to a new common root. We thus have $m_0 = 1$ and, by~\eqref{subtree_recursion}, $m_{k+1} = (1+m_k)^2$ for $k \geq 0$. The resulting sequence $1,4,25,676,\ldots$ (A004019 in the OEIS \cite{OEIS}) is well-studied; Aho and Sloane \cite[Example 2.2]{AS} proved that $m_k$ is the nearest integer to $b^{2^k} - 1$ for a constant 
$$b = \exp \Big( \sum_{k \geq 0} 2^{-k} \log \Big( 1 + \frac{1}{m_k} \Big) \Big) \approx 2.2585184506.$$
So for the limit measure $\sigma$, we have
\begin{equation*}
    \mathbb{E}_{\sigma}(f) = \sum_{k \geq 0} 2^{-k-1} \log \big( 1 + \tfrac{1}{m_k} \big) = \frac{\log b}{2} \approx 0.4073545227.
\end{equation*}

    \begin{figure}[htbp]
        \begin{center}    
            \begin{tikzpicture}     
        	{
                \draw[thick] (-6.8,0.5)--(-5,0.5)--(-3,0.5)--(-2,0.5);
                \node at (-1.5,0.5) {$\ldots$};
                
                \draw[fill] (-6.8,0.5) circle (0.06);
                \node at (-6.8,0.8) {$x_0$};

                \node at (-5.8,0.8) {$x_1$};
                \draw[fill] (-5.8,0.5) circle (0.06);
                \draw[fill] (-5.8,0) circle (0.06);
                \draw[thick] (-5.8,0.5)--(-5.8,0);

                \node at (-4.5,0.8) {$x_2$};
                \draw[fill] (-4.5,0.5) circle (0.06);
                \draw[fill] (-4.5,0) circle (0.06);
                \draw[thick] (-4.5,0.5)--(-4.5,0);
                \draw[thick] (-4.8,-0.4)--(-4.5,0)--(-4.2,-0.4);
                \draw[fill] (-4.8,-0.4) circle (0.06);
                \draw[fill] (-4.2,-0.4) circle (0.06);

                \node at (-3,0.8) {$x_3$};
                \draw[fill] (-3,0.5) circle (0.06);
                \draw[fill] (-3,0) circle (0.06);
                \draw[thick] (-3,0)--(-3,0.5);
                \draw[thick] (-3.3,-0.4)--(-3,0)--(-2.7,-0.4);
                \draw[fill] (-3.3,-0.4) circle (0.06);
                \draw[fill] (-2.7,-0.4) circle (0.06);
                \draw[fill] (-3.45,-0.7) circle (0.06);
                \draw[fill] (-3.15,-0.7) circle (0.06);
                \draw[thick] (-3.45,-0.7)--(-3.3,-0.4)--(-3.15,-0.7);
                \draw[fill] (-2.85,-0.7) circle (0.06);
                \draw[fill] (-2.55,-0.7) circle (0.06);
                \draw[thick] (-2.85,-0.7)--(-2.7,-0.4)--(-2.55,-0.7);
             
        	     } 	
            \end{tikzpicture}\\ 
        \end{center} 
        \caption{$RT^3$ is the infinite canopy tree.}
        \label{fig:canopytree} 
    \end{figure}

On the other hand, one can obtain the explicit formula
\begin{equation*}
   N(RT_n^3) = (1+m_{n-1})^3 + \sum_{k=0}^{n-1} 3 \cdot 2^{n-k-1} m_k.
\end{equation*} 
The first term stands for the number of subtrees of $RT_n^3$ containing the centre (making use of~\eqref{subtree_recursion} again), while the $k$-th term in the sum is the number of subtrees where the innermost vertex belongs to $L^k_n$. In view of the rapid growth of the sequence $m_k$, the first term dominates the asymptotic behaviour (note also that the centre of $RT_n^3$ is the subtree core), and we have
\begin{equation*}
\lim_{n \to \infty} \frac{\log N(RT_n^3)}{|RT_n^3|} = \lim_{n \to \infty} \frac{\log( (1+m_{n-1})^3)}{3 \cdot 2^n-2} = \lim_{n \to \infty} \frac{\log (1+m_{n-1})}{2^n} = \frac{\log b}{2},
\end{equation*}
again confirming \cref{thm:BSconstant_general}. \hfill \(\diamondsuit\)

\section{Subtree density and Benjamini--Schramm convergence}\label{sec:density}

\subsection{Proof of Theorem~\ref{main local conv theorem}}

Our aim in this section is to prove convergence of $D(T_n)$ under the conditions of Theorem~\ref{main local conv theorem}. Before we proceed with the proof, let us explain why the condition on the limit measure $\sigma$ is required. Consider the sequence of paths $P_n$, and compare this to a sequence of trees $T_n$ that is constructed as follows: $T_n$ consists of a path with $n$ vertices, with a number $\lambda(n)$ of leaves attached to each of the two ends such that $\lambda(n) = \omega(\log n)$ but $\lambda(n) = o(n)$. These trees (known as \emph{double brooms}) occur frequently in the study of the mean subtree order and the subtree density.

Observe that both sequences have the same Benjamini--Schramm limit: the two-sided infinite path $P_{\infty}$ corresponding to the measure $\sigma$ with
$$\sigma(P_{2r+1}^*,r) = 1$$
for all $r$.

The subtree densities, however, have different limits: as mentioned in the introduction, the mean subtree order $\mu(P_n)$ of an $n$-vertex path is $\frac{n+2}{3}$, so the subtree density is $D(P_n) = \frac{n+2}{3n}$, which tends to $\frac13$. On the other hand, the number of subtrees of $T_n$ that contain both ends of the path (and thus the entire path between them) is $2^{2\lambda(n)}$, while the number of all other subtrees is $O(n 2^{\lambda(n)})$. So subtrees that contain the entire path dominate the rest, and all of them contain at least $n$ of the $n  + 2\lambda(n)$ vertices. It follows from these observations that $D(T_n) \to 1$ as $n \to \infty$. 

In fact, by modifying the construction, it is even possible to achieve any limit in $[\frac13,1]$ for the density while keeping the same Benjamini--Schramm limit. Fix a positive constant $x$, and let $T_n$ be the double broom consisting of a path of $p = \lfloor x 2^n \rfloor$ vertices and $n$ leaves attached at each end. This double broom has
\begin{itemize}
\item $2^{2n}$ subtrees that contain the entire path, and these subtrees have $p + n$ vertices on average;
\item $2(p-1) \cdot 2^{n}$ subtrees that contain exactly one end of the path. These subtrees have $\frac{p+n}{2}$ vertices on average.
\item $\binom{p-1}{2}$ subtrees consisting of part of the path, but containing neither end. These subtrees have $\frac{p}{3}$ vertices on average.
\item $2n$ subtrees consisting of a single leaf.
\end{itemize}
 It is not difficult to show from this that the density of $T_n$ converges to
$$\frac{x^2+6x+6}{3x^2+12x+6},$$
which attains all possible values in the interval $(\frac13,1)$ if we let $x$ vary in $(0,\infty)$. We will return to this discussion of the possible values in the following section.

These examples illustrate why long paths need to be ruled out. So throughout the rest of this section, we assume that $T_n$ is a sequence of finite trees that satisfies the assumptions of Theorem~\ref{main local conv theorem}: $\abs{T_n} \to \infty$ as $n \to \infty$, $T_n$ converges in the Benjamini--Schramm sense, and the limit measure $\sigma$ has the property that
$$\lim_{r \to \infty} \sigma(P_{2r+1}^*,r) = 0.$$
For finite $T$, the subtree density $D(T)$ of $T$ can be calculated as
$$D(T)=\frac{\mu(T)}{|T|}=\frac{1}{|T|} \sum_{v\in T} p(T,v),$$
where $p(T,v) = N(T,v)/N(T)$ is the probability that a random subtree contains $v$. So we would like to apply the same approach as in the previous section, with $p(T,v)$ playing the role of $f(T,v)$.

However, in order to extend $p(T,v)$ to infinite trees in a well-defined way, we have to replace it by a different quantity $q(T,v)$ first.

For a subset $A$ of vertices of a finite tree $T$, let $S_A$ be the event that a random subtree (picked uniformly at random among all subtrees of $T$) contains $A$. In particular, we let $S_v$ be the event that a random subtree contains the vertex $v$, and let $S_{v,w}$ be the event that a random subtree contains both the vertices $v$ and $w$. Note that with this notation, $p(T,v)=\mathbb{P}(S_v)$.

Now let $v$ be any vertex of $T$, and let $c$ be the subtree core of $T$ (if there are two, pick the one that is closer to $v$; we keep this rule throughout the section). Now we define
$$q(T,v):=\mathbb{P}(S_{v,c}|S_c)=\mathbb{P}(S_v|S_c).$$
In words, this is the probability that a random subtree, conditioned on containing the subtree core $c$, also contains $v$. Since most subtrees should contain the subtree core, we can expect $q(T,v)$ to be close to $p(T,v)$. This will be made precise later. Before we show that $p(T,v)$ can in fact be replaced by $q(T,v)$, we need some more auxiliary definitions.

Let $T$ be a finite tree, possibly with a distinguished root. A \emph{path segment} of $T$ is a path with the property that all its internal vertices have degree $2$ in $T$, while each of the ends is either the root or a vertex whose degree is not $2$. We say that a vertex \emph{lies in} a path segment if it is one of its internal vertices.

For a positive integer $m$, we call a finite (possibly rooted) tree $T$ an $m$-\emph{good tree} if at most half of the vertices of $T$ lie in path segments of length greater than $m$.

Trees that are not $m$-good trees have many long path segments and thus many vertices whose $r$-neighbourhood (i.e., the closed $r$-ball around the vertex) is $P_{2r+1}^*$. This is quantified in our next lemma.

\begin{lemma}\label{lem:many_path-nbhds}
Suppose that the (possibly rooted) tree $T$ is not an $m$-good tree. Then, for every positive integer $r$ with $2r-1 < m$, $T$ has at least $\frac12(1 - \frac{2(r-1)}{m}) |T|$ vertices for which the $r$-neighbourhood is isomorphic to $P_{2r+1}^*$.
\end{lemma}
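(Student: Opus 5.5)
Since $T$ is not $m$-good, more than $\frac12|T|$ vertices lie in path segments of length greater than $m$. The plan is to go through these long path segments one at a time and, inside each, count the internal vertices whose $r$-neighbourhood is exactly $P_{2r+1}^*$. Summing these counts will give the bound.

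Fix a path segment $P$ of length $\ell>m$. It has $\ell-1$ internal vertices $u_1,\dots,u_{\ell-1}$, listed in order along the path, and each has degree $2$ in $T$. (If the root lies on $P$, it is an endpoint, so no internal vertex is the root.) Take an internal vertex $u_i$ whose distance to both ends of $P$ is at least $r$, i.e.\ $r\le i\le \ell-r$. Walking from $u_i$ in either direction, we pass through $r-1$ further internal vertices of degree $2$ before possibly reaching an endpoint at distance exactly $r$. Hence the closed $r$-ball around $u_i$ is a path with $2r+1$ vertices centred at $u_i$, namely $P_{2r+1}^*$. Vertices at distance $r$ are allowed to have any degree, since their further neighbours lie outside the $r$-ball. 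The number of such $i$ is $\ell-2r+1$. This is positive because $\ell\ge m+1>2r$, which is where the hypothesis $2r-1<m$ is used.

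Next I compare this count with the $\ell-1$ internal vertices of $P$. The ratio is
\[
\frac{\ell-2r+1}{\ell-1} = 1-\frac{2(r-1)}{\ell-1} \ge 1-\frac{2(r-1)}{m},
\]
because $\ell-1\ge m$. So in every long segment, at least a $(1-\frac{2(r-1)}{m})$-fraction of the internal vertices have $r$-neighbourhood $P_{2r+1}^*$.

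Finally I sum over all long segments. Distinct path segments have disjoint sets of internal vertices, since each internal vertex of degree $2$ determines its segment uniquely by extending in both directions to the nearest vertex of degree $\ne 2$ or to the root. The total number of internal vertices in long segments exceeds $\frac12|T|$, so the total count is at least $\frac12(1-\frac{2(r-1)}{m})|T|$.

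The only delicate point is the behaviour at the ends of a segment. An endpoint may be a leaf, a vertex of degree $\ge3$, or the root. None of these matters for vertices at distance $\ge r$ from the ends, because the endpoint then sits at distance exactly $r$ at most and its other edges fall outside the ball. I do not expect any further obstacle.
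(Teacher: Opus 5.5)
Your proposal is correct and matches the paper's proof: in each segment of length $\ell>m$ you discard the first and last $r-1$ internal vertices, bound the surviving fraction by $1-\frac{2(r-1)}{\ell-1}\ge 1-\frac{2(r-1)}{m}$, and sum over the long segments, which together contain more than $\frac12|T|$ vertices. The paper leaves two points implicit that you state explicitly: segments have disjoint interiors, and the root only ever occurs as an endpoint. The paper relies on the second point later, when it applies this lemma inside subtrees of $T_n$.
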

\begin{proof}
A path segment of length $s > m$ has $s-1$ internal vertices. Except for the first and last $r-1$ of these vertices, each of them has an $r$-neighbourhood that is isomorphic to $P_{2r+1}^*$. The proportion of these vertices among all internal vertices of the path segment is thus at least
$$\frac{s-1-2(r-1)}{s-1} = 1 - \frac{2(r-1)}{s-1} \geq 1 - \frac{2(r-1)}{m}.$$
Since there are more than $\frac12|T_n|$ vertices in segments of length $> m$, we conclude that there are at least
$\frac12(1 - \frac{2(r-1)}{m}) |T|$ vertices whose $r$-neighbourhood is isomorphic to $P_{2r+1}^*$. 
\end{proof}

Now we show that every sequence of trees that satisfies the conditions of Theorem~\ref{main local conv theorem} eventually consists of $m$-good trees for suitable $m$.

\begin{lemma}\label{lemma-m-delta}
Suppose that $T_n$ is a sequence of finite trees that satisfies the conditions of Theorem~\ref{main local conv theorem}. Then, there exist positive integers $m$ and $N$ such that $T_n$ is an $m$-good tree for all $n\ge N$.
\end{lemma}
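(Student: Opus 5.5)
We argue by contradiction, combining \cref{lem:many_path-nbhds} with the assumption \eqref{eq:no_long_paths}. First we choose $r$. By \eqref{eq:no_long_paths} there is a positive integer $r$ with $\sigma(P_{2r+1}^*,r) < \frac18$. Then we choose $m$ large compared to $r$; concretely $m \geq 4(r-1)+1$, which also ensures $2r-1<m$. With this choice, $\frac12\bigl(1-\frac{2(r-1)}{m}\bigr) \geq \frac14$.

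Suppose, for contradiction, that no $N$ works for this $m$. Then infinitely many $T_n$ fail to be $m$-good. For each such $n$, \cref{lem:many_path-nbhds} shows that at least $\frac14|T_n|$ vertices of $T_n$ have $r$-neighbourhood isomorphic to $P_{2r+1}^*$. In the notation of the introduction, this gives $\sigma_n(P_{2r+1}^*,r) \geq \frac14$ along an infinite subsequence. Benjamini--Schramm convergence gives $\sigma_n(P_{2r+1}^*,r) \to \sigma(P_{2r+1}^*,r) < \frac18$. This contradicts the bound along the subsequence, so there is an $N$ such that $T_n$ is $m$-good for all $n \geq N$.

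The only delicate point is the order of quantifiers. We must fix $r$ first, using the limit measure, and only then choose $m$ depending on $r$, so that the proportion bound in \cref{lem:many_path-nbhds} stays bounded away from $0$. The hypothesis $|T_n| \to \infty$ is not even needed here. The rooted/unrooted distinction in the definition of path segments is irrelevant, since the $T_n$ are unrooted.
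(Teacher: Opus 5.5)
Your proof is correct and is essentially the paper's argument. The paper also argues by contradiction, applying \cref{lem:many_path-nbhds} with $m=4(r-1)$ to get $\sigma(P_{2r+1}^*,r)\ge\frac14$ for every $r$, which contradicts \eqref{eq:no_long_paths}; you simply fix $r$ with $\sigma(P_{2r+1}^*,r)<\frac18$ first, which makes the working $m$ explicit. One tiny slip: for $r=1$ the bound $m\ge 4(r-1)+1$ does not guarantee $2r-1<m$, but this is harmless, since \eqref{eq:no_long_paths} lets you take $r\ge 2$ (or you can simply take $m\ge 4r$).
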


\begin{proof}
Suppose that there are no such $m$ and $N$. Thus, for every positive integer $m$, there are infinitely many indices $n$ for which $T_n$ is not an $m$-good tree. Let us apply this with $m = 4(r-1)$. By Lemma~\ref{lem:many_path-nbhds}, $T_n$ must have at least $\frac{|T_n|}{4}$ vertices for which the $r$-neighbourhood is isomorphic to $P_{2r+1}^*$. Since this is true for infinitely many $n$, we must have $\sigma(P_{2r+1}^*,r) \geq \frac14$ in the limit measure. As $r$ was arbitrary, this contradicts our assumptions.
\end{proof}

The following lemma shows that $m$-good trees need to have linearly many leaves.

\begin{lemma}\label{lemma: existence of C}
For every positive integer $m$, there exists a constant $C=C(m)$ such that every (possibly rooted) $m$-good tree $T$ has at least $C|T|$ leaves (excluding the root even if it is a leaf).
\end{lemma}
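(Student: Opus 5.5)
We prove the statement by a counting argument based on the decomposition of $T$ into path segments. Let $L$ be the set of leaves of $T$, not counting the root, and let $B$ be the set of vertices whose degree is at least $3$. If $T$ is rooted, we count the root together with $B$; call the root special. The idea is that all vertices lie either in $L$, in $B$ (including the root), or in the interior of path segments. Vertices of the last kind are of two types: those in \emph{short} segments of length at most $m$, and those in \emph{long} segments. The $m$-good condition says the long-segment vertices make up at most half of $T$. We then bound everything else linearly in $|L|$.

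\textbf{Step 1 (branch vertices versus leaves).} The standard handshake count gives $\sum_v (\deg v - 2) = -2$. Hence the number of vertices of degree at least $3$ is at most $|L| - 2$. Counting the root as well, we get $|B| \le |L|$ (with a trivial exception for very small trees, handled separately). This covers the case where the root is a leaf of $T$ and so is not counted in $L$.

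\textbf{Step 2 (number of segments).} Contract every path segment to a single edge. The resulting tree has vertex set contained in $L \cup B$. So the number of path segments is at most $|L| + |B| - 1 \le 2|L|$.

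\textbf{Step 3 (short segments).} Each segment of length at most $m$ contains at most $m-1$ internal vertices. By Step 2, the vertices lying in short segments number at most $2(m-1)|L|$.

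\textbf{Step 4 (combine).} Since at most $|T|/2$ vertices lie in long segments,
\[
\frac{|T|}{2} \le |L| + |B| + 2(m-1)|L| \le 2m|L|.
\]
This gives $|L| \ge |T|/(4m)$, so we may take $C = 1/(4m)$.

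\textbf{Degenerate cases.} We must handle tiny trees, such as a single vertex or a rooted path whose only leaf is at the far end. We reduce $C$ if needed, so that $C = 1/(4m+4)$ covers them as well. The main care point is the degenerate case $|L| = 0$. This can only happen when $T$ is a single rooted vertex, since a rooted path has at least one non-root leaf. In that case no constant works, so the statement must implicitly exclude this case or count it trivially. Otherwise the argument is routine bookkeeping of the handshake identity with the special root.
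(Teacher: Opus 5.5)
Your proof is correct and is essentially the paper's argument: the paper also contracts the path segments so that only the non-root leaves, the branch vertices and the root remain, uses the degree-sum formula to show at least half of these are leaves, and charges at most $m$ vertices per short segment, arriving at the same constant $C=1/(4m)$. Your degenerate-case paragraph is only needed for the one-vertex tree, which neither argument can handle; your main count already covers rooted paths and every tree with at least two vertices, so there is no need to shrink $C$ to $1/(4m+4)$.
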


\begin{proof}
Let $T$ be an $m$-good tree, and let $k$ be the number of leaves of $T$. We perform two contractions of the path segments in $T$ that do not alter the number of leaves. First, we contract all path segments longer than $m$ to length~$1$. The resulting tree contains at least $\frac{|T|}{2}$ vertices by the definition of an $m$-good tree. Next, we contract all path segments (regardless of length) to length~$1$ and let $T^*$ denote the final tree. Since the tree before the second contraction has at least $\frac{|T|}{2}-1$ edges and only contains path segments whose length is at most $m$, $T^*$ has at least $(\frac{|T|}{2}-1)/m$ edges, thus at least $1+(\frac{|T|}{2}-1)/m \geq \frac{|T|}{2m}$ vertices. Note that $T^*$ does not have vertices of degree~$2$, except possibly for the root if there is one. Applying the degree sum formula to $T^*$, we obtain
$$2(|T^*|-1) = \sum_{v\in T^*} \deg(v) \ge k + 1 + 3(|T^*|-k-1),$$
which implies that
$$k\ge \frac{|T^*|}{2} \ge \frac{|T|}{4m}.$$ 
So the statement of the lemma holds with $C=\frac{1}{4m}$.
\end{proof}

Combining Lemma~\ref{lemma: existence of C} and Lemma~\ref{lemma-m-delta}, we obtain the following corollary.

\begin{corollary}\label{cor:many_leaves}
    Suppose that $T_n$ is a sequence of finite trees that satisfies the conditions of Theorem~\ref{main local conv theorem}. Then, there exists a constant $C > 0$ such that $T_n$ has at least $C|T_n|$ leaves for sufficiently large $n$.
\end{corollary}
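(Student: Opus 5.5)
This corollary follows by chaining the two preceding lemmas. First, I apply Lemma~\ref{lemma-m-delta} to the sequence $T_n$. Since $T_n$ satisfies the hypotheses of Theorem~\ref{main local conv theorem}, this gives positive integers $m$ and $N$ such that $T_n$ is an $m$-good tree for every $n \ge N$. The important point is that this $m$ is a single value that works for every $n \ge N$; it does not depend on $n$.

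Second, I apply Lemma~\ref{lemma: existence of C} with this fixed $m$. The trees $T_n$ carry no distinguished root, so we are in the unrooted case and every leaf is counted. The lemma then states that each $T_n$ with $n \ge N$ has at least $C(m)\,|T_n|$ leaves, and its proof gives the explicit value $C(m) = \frac{1}{4m}$. Setting $C = \frac{1}{4m} > 0$ proves the corollary for all $n \ge N$, which is exactly what ``for sufficiently large $n$'' requires.

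There is no real obstacle here. The only thing to check is that the constant is uniform in $n$. This holds because Lemma~\ref{lemma-m-delta} supplies one $m$ for the whole tail of the sequence, and $C$ depends on $m$ alone. Lemma~\ref{lem:many_path-nbhds} and the no-long-paths condition~\eqref{eq:no_long_paths} are already used inside the proof of Lemma~\ref{lemma-m-delta}, so they do not need to be invoked again.
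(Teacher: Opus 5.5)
Your proposal is correct and matches the paper's argument: the corollary is obtained by combining Lemma~\ref{lemma-m-delta}, which gives a single $m$ valid for all $n \ge N$, with Lemma~\ref{lemma: existence of C}, which gives $C = \frac{1}{4m}$. Your observation that the constant is uniform in $n$, because $m$ does not depend on $n$, is exactly what makes the combination work.
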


Next, we prove the following lemma, which states that the averages of $p(T_n,v)$ and $q(T_n,v)$ converge to the same limit under our assumptions.

\begin{lemma} \label{lemma: p and q close enough}
    Suppose that $T_n$ is a sequence of finite trees that satisfies the conditions of Theorem~\ref{main local conv theorem}. Then we have
    $$\frac{1}{|T_n|}  \Big|\sum_{v\in T_n} p(T_n,v)-\sum_{v\in T_n} q(T_n,v) \Big| \to 0$$
    as $n \to \infty.$
\end{lemma}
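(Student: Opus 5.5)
The plan is to reduce the statement to the claim that a uniformly random subtree contains the subtree core with probability tending to $1$. Then we prove that claim using \cref{cor:many_leaves} together with the maximality property of the core.

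\emph{Reduction.} Write $T=T_n$ and $c$ for its core. Note that the rule of picking the core closer to $v$ only matters when there are two cores; I will handle that case by applying the argument below to each of the two cores separately. Let $S$ be a uniformly random subtree. Since $\sum_v p(T,v)=\mathbb{E}|S|$ and $\sum_v q(T,v)=\mathbb{E}(|S|\mid S_c)$, we get
\begin{equation*}
\sum_{v} p(T,v)-\sum_v q(T,v)=\mathbb{P}(\overline{S_c})\bigl(\mathbb{E}(|S|\mid \overline{S_c})-\mathbb{E}(|S|\mid S_c)\bigr).
\end{equation*}
The bracket is at most $|T|$ in absolute value. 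It therefore suffices to show $\mathbb{P}(\overline{S_c}) = (N(T)-N(T,c))/N(T)\to 0$. Here \cref{lemma:core_estimate} only gives $\mathbb{P}(S_c) \geq 1/2$, which is not enough.

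\emph{Counting subtrees that avoid the core.} Every subtree avoiding $c$ has a unique vertex $u$ closest to $c$, and it belongs to $\mathcal{S}_c(T,u)$. Hence $N(T)-N(T,c)\le \sum_{u\neq c} N_c(T,u)$. Let $v_1,\dots,v_d$ be the neighbours of $c$, with branches $T_i$ and $n_i=N(T_i,v_i)$. For $u\in T_i$, adding the path from $u$ to $v_i$ injects $\mathcal{S}_c(T,u)$ into $\mathcal{S}(T_i,v_i)$, so $N_c(T,u)\le n_i$. The core property $N(T,v_i)\le N(T,c)$, expanded via~\eqref{subtree_recursion} exactly as in the proof of \cref{lemma:N_and_M}, gives $n_i\le P_i:=\prod_{j\ne i}(1+n_j)$. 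Consequently
\begin{equation*}
N(T,c)=(1+n_i)P_i\ge n_i^2\ge N_c(T,u)^2 .
\end{equation*}
Summing over $u$ yields
\begin{equation*}
\frac{N(T)-N(T,c)}{N(T)}\le \frac{|T|}{\sqrt{N(T,c)}} .
\end{equation*}

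\emph{Exponential lower bound.} By \cref{cor:many_leaves}, $T_n$ has at least $C|T_n|$ leaves for large $n$. Deleting any subset of the leaves from $T_n$ leaves a subtree, as long as $T_n$ is not a star; the star case is trivial, or can be handled by keeping one leaf. So $N(T_n)\ge 2^{C|T_n|}$. By \cref{lemma:core_estimate} this gives $N(T_n,c_n)\ge N(T_n)/2-1\ge 2^{C|T_n|-2}$. Therefore $\mathbb{P}(\overline{S_{c_n}})\le |T_n|\,2^{1-C|T_n|/2}\to 0$, and the averaged difference is at most this quantity, which proves the lemma.

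The main obstacle is the middle step: realising that the crude bound $\mathbb{P}(S_c)\ge 1/2$ must be upgraded, and that the inequality $n_i\le P_i$ at the core is exactly what makes $N_c(T,u)\le\sqrt{N(T,c)}$. The rest is bookkeeping: the two-core case (apply the bound to whichever core is chosen for $v$) and degenerate small cases.
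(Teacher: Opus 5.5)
Your proof is correct, and its skeleton is the paper's. The averaged difference is at most $\mathbb{P}(\overline{S_c})$, and this probability decays exponentially because of the linearly many leaves from \cref{cor:many_leaves}.

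The genuine difference is the middle step. The paper cites \cite[Lemma 5.1]{RW18} for $\mathbb{P}(\overline{S_c})\le |T|2^{-L(T)/2}$. You instead prove the self-contained estimate $\mathbb{P}(\overline{S_c})\le |T|/\sqrt{N(T,c)}$. You sort core-avoiding subtrees by their vertex closest to $c$, and use the maximality inequality $n_i\le\prod_{j\ne i}(1+n_j)$, which is the same computation as in \cref{lemma:N_and_M}. This buys independence from the external reference; the paper's citation buys brevity.

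Your bound recovers the cited one exactly. For $|T|\ge 3$ the core is an internal vertex, since a leaf's neighbour always lies in strictly more subtrees. Hence every subtree formed by all internal vertices plus any set of leaves contains $c$, so $N(T,c)\ge 2^{L(T)}$. This also makes your detour through \cref{lemma:core_estimate} unnecessary.

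Two small points:
\begin{itemize}
\item The paper's pointwise bound $|p(T,v)-q(T,v)|\le\mathbb{P}(\overline{S_c})$ handles the two-core convention for free. Your global identity via $\mathbb{E}|S|$ must be applied separately to the vertex sets assigned to each core. This works, since each bracket is bounded by the size of the corresponding set, but you should say it explicitly.
\item Your caveat about stars is unnecessary: deleting all leaves of a star leaves its centre. The only degenerate trees are those with at most two vertices, which are irrelevant since $|T_n|\to\infty$.
\end{itemize}
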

\begin{proof}
The following estimate holds for an arbitrary tree $T$
with an arbitrary vertex $v$ and subtree core $c$:
    \begin{align*}
    \big| p(T,v) - q(T,v) \big| &= \big| \mathbb{P}(S_v)- \mathbb{P}(S_v|S_c) \big| \\
    &= \big| \mathbb{P}(S_v|S_c)\mathbb{P}(S_c)+\mathbb{P}(S_v|\overline{S_c})\mathbb{P}(\overline{S_c})- \mathbb{P}(S_v|S_c) \big| \\
    &= \big|\mathbb{P}(S_v|\overline{S_c})\mathbb{P}(\overline{S_c})-\mathbb{P}(S_v|S_c)\mathbb{P}(\overline{S_c}) \big| \\
    &= \big|\mathbb{P}(S_v|\overline{S_c})-\mathbb{P}(S_v|S_c) \big| \mathbb{P}(\overline{S_c}) \le \mathbb{P}(\overline{S_c}).
    \end{align*}
In words, the absolute difference between $p(T,v)$ and $q(T,v)$ is bounded above by the probability that a random subtree does not contain the core $c$.

By \cite[Lemma 5.1]{RW18}, we have
$$\mathbb{P}(\overline{S_c}) \leq |T|2^{-\frac{L(T)}{2}},$$
where $L(T)$ is the number of leaves of $T$. We can now apply this to $T_n$ and its subtree core $c_n$, giving us
$$\frac{1}{|T_n|}  \Big|\sum_{v\in T_n} p(T_n,v)-\sum_{v\in T_n} q(T_n,v) \Big| \leq \frac{1}{|T_n|}  \sum_{v\in T_n} \big| p(T_n,v)- q(T_n,v) \big| \leq 
\mathbb{P}(\overline{S_{c_n}}) \leq |T_n|2^{-\frac{L(T_n)}{2}}.$$
However, by Corollary~\ref{cor:many_leaves}, $L(T_n) \geq C|T_n|$ for sufficiently large $n$, so
$$\frac{1}{|T_n|}  \Big|\sum_{v\in T_n} p(T_n,v)-\sum_{v\in T_n} q(T_n,v) \Big| \leq |T_n|2^{-C|T_n|/2}.$$
Since $|T_n|$ tends to $\infty$, the statement follows.
\end{proof}

Our next aim is to extend the definition of $q(T,v)$ to arbitrary (including infinite) trees. First, for a finite tree $T$ and a vertex $v$, let $c$ be the subtree core of $T$ (the one that is closer to $v$ if there are two). Let $v = v_0,v_1,\ldots,v_k = c$ be the unique path from $v$ to $c$. By the chain rule for conditional probabilities, we have
$$q(T,v)= \mathbb{P}(S_{v,c}|S_c) = \mathbb{P}(S_{v_0,c}|S_{v_{k},c}) = \prod_{i = 0}^{k-1} \mathbb{P}(S_{v_i,c}|S_{v_{i+1},c}).$$
The factor $\mathbb{P}(S_{v_i,c}|S_{v_{i+1},c})$ represents the conditional probability that a subtree contains $v_i$ and $c$ (thus also all vertices in between) given that it contains $v_{i+1}$ and $c$ (and all vertices in between). Let $T_{v_i}$ be the component of $T - v_iv_{i+1}$ that contains $v_i$. A subtree of $T - T_{v_i}$ that contains $c$ and $v_{i+1}$ can be extended to a subtree that also contains $v_i$ by adding any subtree of $T_{v_i}$ that contains $v_i$. There are $N(T_{v_i},v_i)$ possibilities to do so, so we have
$$\mathbb{P}(S_{v_i,c}|S_{v_{i+1},c}) = \frac{N(T_{v_i},v_i)}{N(T_{v_i},v_i) + 1}.$$
We know from Lemma~\ref{lemma:N_and_M} that $N(T_{v_i},v_i) = M(T,v_i)$ for all $i < k$. Thus it follows that
$$q(T,v) = \prod_{i = 0}^{k-1} \mathbb{P}(S_{v_i,c}|S_{v_{i+1},c}) = \prod_{i = 0}^{k-1} \frac{M(T,v_i)}{M(T,v_i)+1}.$$
Let us set 
\begin{equation}\label{eq:qi_star}
    q_i^*(T,v) = \frac{M(T,v_i)}{M(T,v_i)+1}
\end{equation}
for $i < k$ and $q_i^*(T,v) = 1$ for $i \geq k$. Then we can conclude that
\begin{equation}\label{eq:q-product}
q(T,v) = \prod_{i \geq 0} q_i^*(T,v).
\end{equation}
Now we extend this to infinite trees in a similar fashion. For a vertex $v$ of an infinite tree $T$, we define a (finite or infinite) sequence $v_0,v_1,\ldots$ recursively as follows: first, $v_0 = v$. Now if $T - v_j$ has a unique infinite component, then let $v_{j+1}$ be $v_j$'s neighbour in that component (i.e., we always move towards the infinite component). Otherwise (i.e., if there are two or more infinite components), the sequence stops with $v_j$. We still define $q_i^*(T,v)$ by~\eqref{eq:qi_star} if the sequence does not stop with $v_i$. If $v_i$ is the last vertex in the sequence, then $q_j^*(T,v) = 1$ for all $j \geq i$. Since $M(T,v_i) = \infty$ if $T - v_i$ has more than one infinite component, this is also consistent with~\eqref{eq:qi_star}.

Now, we can extend $q(T,v)$ to infinite trees $T$ by means of~\eqref{eq:q-product}. It will be shown later that this product converges under suitable assumptions, but we will actually work with a truncated version. Before we can do so, we need one more technical lemma.
 
\begin{lemma}\label{lemma: product uni conv}
Suppose that $T_n$ is a sequence of finite trees that satisfies the conditions of Theorem~\ref{main local conv theorem}. Then, for every $\epsilon>0$, there exist a positive integer $N$ and a positive constant $C$ such that every tree $T_n$ with $n \geq N$ has a subset of vertices $U_n$ with the following two properties:
    \begin{enumerate}
        \item $\frac{|U_n|}{|T_n|} >1-\epsilon,$ and 
        \item $\left| q_i^*(T_n,u)-1 \right| \leq 2^{-Ci}$ for every $i \geq 0$ and every vertex $u\in U_n$.
    \end{enumerate}
\end{lemma}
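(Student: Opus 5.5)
Since $1-q_i^*(T,v)=\frac{1}{M(T,v_i)+1}$, the task is to show that for most vertices $u$ the values $M(T_n,u_i)$ along the path $u=u_0,u_1,\dots$ towards the core grow at least exponentially in $i$. The plan is to write $M(T_n,u_i)=N(T_{u_i},u_i)$ (Lemma~\ref{lemma:N_and_M}), where $T_{u_i}$ is the branch hanging off the core path at $u_i$. Then \eqref{subtree_recursion} gives
\[
N(T_{u_{i+1}},u_{i+1})\ge 1+N(T_{u_i},u_i).
\]
This already yields $M(T_n,u_i)\ge i+1$, but we need more: it suffices to show that $T_{u_i}$ contains many leaves. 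Indeed, a rooted tree with $\ell$ non-root leaves has at least $2^{\ell}$ subtrees containing the root. To see this, take the union of the paths from the root to any chosen set of leaves; different sets give different subtrees. So the goal reduces to the following: for $u\in U_n$ and every $i$, the branch $T_{u_i}$ has at least $C'i$ leaves. With that, $1-q_i^*\le 2^{-C'i}$.

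To find $U_n$, I will use Lemma~\ref{lemma-m-delta} together with the Benjamini--Schramm limit. Fix $\epsilon$. By Lemma~\ref{lemma-m-delta}, $T_n$ is $m$-good for large $n$. That is too weak, since we need leaves to be uniformly present along every path, not just globally. So I will use local statistics instead. For a radius $R$, call a vertex $w$ \emph{bad} if its closed $R$-ball contains no vertex of degree $\ne 2$ other than those on a bare path, i.e.\ its $R$-ball is $P_{2R+1}^*$. By \eqref{eq:no_long_paths}, pick $R$ with $\sigma(P_{2R+1}^*,R)<\delta$. Then for large $n$, fewer than $2\delta|T_n|$ vertices of $T_n$ are bad.

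A good vertex $w$ has a vertex of degree $\ne 2$ within distance $R$. Since $T_n$ is finite, this yields a leaf within distance roughly $R$ only if the branching goes the right way. I therefore expect a cleaner route to be: a vertex of degree $\ge 3$ within distance $R$ of $u_j$ produces a leaf in some branch not containing the core. This is because of the structure of the core path: the branches off the path are exactly the pieces away from the core. Then, among the $i$ vertices $u_0,\dots,u_{i-1}$ inside $T_{u_i}$, roughly every $2R$ steps one finds a new leaf. This requires the path vertices to be non-bad often enough.

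To handle that, I would use a Markov-type averaging argument. Let $B_n$ be the bad vertices. Define $U_n$ to be those $u$ such that, for all $i\ge i_0$, at least half of $u_0,\dots,u_{i-1}$ are not bad. Each bad vertex $w$ can be $u_j$ for many $u$, namely all of $T_w$ at distance $j$. So counting needs care: the number of pairs $(u,j)$ with $u_j=w$ is at most $|T_w|$. That is not bounded, which suggests instead weighting by $2^{-Cj}$ or using mass transport.

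I expect exactly this counting step to be the main obstacle: controlling how many starting vertices $u$ see too many bad vertices on their core path. My first attempt would exploit that along a long run of bad vertices on the core path, $T_{u_i}$ is essentially a long bare path with the start vertex inside it. So the $u$ whose paths are mostly bad must themselves lie in or near long path segments, and there are at most $2\delta|T_n|$ of those. Small $i<i_0$ will be absorbed by shrinking $C$, using $1-q_i^*\le 1/2$ trivially.
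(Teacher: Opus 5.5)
There is a genuine gap, and it goes beyond the counting step you left open: the set $U_n$ you propose can fail property (i) for every choice of $R$ and $i_0$. Recall that $U_n$ consists of all $u$ such that, for every $i\ge i_0$, at least half of $u_0,\dots,u_{i-1}$ have $R$-ball different from $P^*_{2R+1}$. Your heuristic that a long run of locally bad vertices on the core path makes $T_{u_i}$ ``essentially a bare path with the start vertex inside it'' is false. The vertex $u$ may sit at the bottom of a large bushy subtree whose exit towards the core is a much longer bare path.

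Here is a concrete example. Fix $a\ge 10$. A level-$0$ unit is a star with centre $h_0$ and $a^2$ leaves. A level-$j$ unit is a new vertex $h_j$ joined to the centres of $a^2$ copies of the level-$(j-1)$ unit, each by a bare path with $a^j$ internal vertices. Let $T_n$ be the level-$n$ unit; by symmetry its core is $h_n$.
\begin{itemize}
\item The level-$l$ bare paths carry an $O(a^{-l})$ fraction of the vertices. One checks that the sequence is Benjamini--Schramm convergent and satisfies \eqref{eq:no_long_paths}.
\item The level-$0$ stars carry a $1-O(1/a)$ fraction of the vertices.
\item For any $R$ and $i_0$, pick $j$ with $a^j\ge \max(10R,i_0)$ and take $n\ge j$. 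A level-$0$ vertex $u$ reaches $h_j$ after $i\approx a^j\frac{a}{a-1}$ steps. At least $a^j-2R$ of $u_0,\dots,u_{i-1}$ are locally bad, which is more than half.
\end{itemize}
So your $U_n$ misses almost every vertex. Yet (ii) does hold for these $u$: the level-$(j-1)$ unit hanging below the level-$j$ path has $a^{2j}$ leaves, all far from the path. Counting leaves within distance $R$ of the path is simply the wrong quantity.

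The missing idea is to measure badness on the whole hanging subtree, not on the ball around a path vertex. Let $T_n^{(v)}$ be the set of vertices whose path to the core passes through $v$.
\begin{itemize}
\item Call $v$ bad if $T_n^{(v)}$, rooted at $v$, is not $m$-good, where $m=4(r-1)$. Let $U_n$ be the set of vertices lying in no bad $T_n^{(v)}$.
\item For $u\in U_n$, Lemma~\ref{lemma: existence of C} gives at least $C|T_n^{(u_i)}|\ge Ci$ non-root leaves in $T_n^{(u_i)}$. Your union-of-paths observation then gives $M(T_n,u_i)\ge 2^{Ci}$.
\item The sets $T_n^{(v)}$ are pairwise nested or disjoint. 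Hence the complement of $U_n$ is a disjoint union of maximal bad subtrees $S$.
\item By Lemma~\ref{lem:many_path-nbhds}, each such $S$ has at least $|S|/4$ vertices whose $r$-ball is $P^*_{2r+1}$, and these balls are unchanged in $T_n$.
\item Therefore $|T_n\setminus U_n|$ is at most four times the number of such vertices. That number is below $\epsilon|T_n|/4$ for suitable $r$ and large $n$.
\end{itemize}
With this definition the counting becomes immediate, and no weighting or mass transport is needed.
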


\begin{proof}
The construction proceeds in several steps. First, since $\sigma(P_{2r+1}^*,r) \to 0$ as $r \to \infty$ by assumption, we know that $\sigma(P_{2r+1}^*,r) < \frac{\epsilon}{4}$ if we choose $r$ large enough. Since $\sigma(P_{2r+1}^*,r)$ is the limit of the probability that a random vertex of $T_n$ has an $r$-neighbourhood that is isomorphic to $P_{2r+1}^*$, we can find a positive integer $N$ such that $T_n$ contains less than $\frac{\epsilon |T_n|}{4}$ vertices for which the $r$-neighbourhood is isomorphic to $P_{2r+1}^*$.

Consider any tree $T_n$ with $n \geq N$. For a vertex $v$ of $T_n$, let $c$ be the subtree core (if there are two, the one closer to $v$), and let $T_n^{(v)}$ be the subtree of $T_n$ that consists of $v$ and all vertices for which the path to $c$ passes through $v$. If there is only one subtree core, $T_n^{(c)}$ is the whole tree $T_n$. If there are two subtree cores $c_1$ and $c_2$, then $T_n^{(c_1)}$ and $T_n^{(c_2)}$ are the two components of $T - c_1c_2$. Now, set $m = 4(r-1)$. We say that a vertex $v$ is \emph{bad} if $T_n^{(v)}$, rooted at $v$, is not an $m$-good tree. We also call the associated rooted tree $T_n^{(v)}$ a \emph{bad subtree} in this case. Now let us define $U_n$ to be the set of all vertices that \emph{do not lie in any bad subtree}.

The bad subtrees of $T_n$ form a poset with respect to inclusion. Note that any two such trees are either disjoint, or one contains the other. So we can find a set $S_1,S_2,\ldots,S_m$ of maximal elements in this poset such that every vertex that lies in a bad subtree is contained in exactly one $S_i$. In order to verify item (i), we need to show that
$$|T_n| - |U_n| = \sum_{i = 1}^m |S_i| < \epsilon |T_n|.$$
Here, we use Lemma~\ref{lem:many_path-nbhds}: each $S_i$ contains at least $\frac14 |S_i|$ vertices whose $r$-neighbourhood in $S_i$ is isomorphic to $P_{2r+1}^*$. This still applies in $T_n$: note that in the proof of Lemma~\ref{lem:many_path-nbhds}, the relevant $r$-neighbourhood never contains the root as an internal vertex. Hence $T_n$ has at least
$$\frac14 \sum_{i = 1}^m |S_i| = \frac14 (|T_n| - |U_n|)$$
such vertices. On the other hand, we know that this number is less than $\frac{\epsilon |T_n|}{4}$ by our choice of $r$ and $N$, implying that indeed $|T_n| - |U_n| < \epsilon |T_n|$.

It remains to prove (ii). Let $u$ be any vertex of $U_n$. The path to the subtree core is $u = v_0,v_1,\ldots, v_k = c$. The inequality in (ii) is trivial if $i \geq k$ and thus $q_i^*(T_n,u) = 1$, so we only consider $i < k$.

Since $u$ does not lie in a bad subtree by definition, none of the $v_i$ is a bad vertex. This means that $T_n^{(v_i)}$ is an $m$-good tree. By Lemma~\ref{lemma: existence of C}, $T_n^{(v_i)}$ has at least $C|T_n^{(v_i)}|$ leaves, excluding $v_i$ itself even if it is a leaf of $T_n^{(v_i)}$. It follows that the number of subtrees of $T_n^{(v_i)}$ containing $v_i$, which is exactly $M(T_n,v_i)$ by Lemma~\ref{lemma:N_and_M}, is at least $2^{C|T_n^{(v_i)}|}$ (consider all subtrees that contain all vertices of $T_n^{(v_i)}$ except possibly for some of these leaves). Since trivially $\abs{T_n^{(v_i)}} \geq i$, it follows that
$$M(T_n,v_i) \geq 2^{C|T_n^{(v_i)}|} \geq 2^{Ci},$$
and thus
$$0 \leq 1 - q_i^*(T_n,u) = 1 - \frac{M(T_n,v_i)}{M(T_n,v_i) + 1} = \frac{1}{1 + M(T_n,v_i)} \leq \frac{1}{2^{Ci}},$$
which completes the proof.
\end{proof}

\begin{lemma}\label{lemma: finite sum good enough}
Suppose that $T_n$ is a sequence of finite trees that satisfies the conditions of Theorem~\ref{main local conv theorem}. Then, for every $\epsilon>0$, there exist positive integers $N$ and $K$ such that 
$$\left| \sum_{v \in T_n} q(T_n,v)-\sum_{v \in T_n} \prod_{i=0}^K q_i^*(T_n,v) \right| <\epsilon |T_n|$$
holds for all $n>N$.
\end{lemma}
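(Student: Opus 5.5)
We split the sum over $v\in T_n$ according to the set $U_n$ from Lemma~\ref{lemma: product uni conv}, applied with $\epsilon/3$ in place of $\epsilon$. This gives an integer $N$ and a constant $C>0$ such that for $n\ge N$ we have $|T_n\setminus U_n| < \frac{\epsilon}{3}|T_n|$ and $|q_i^*(T_n,u)-1|\le 2^{-Ci}$ for all $u\in U_n$ and all $i\ge 0$.

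For vertices outside $U_n$ we use only the trivial bounds. Every factor $q_i^*(T_n,v)$ lies in $[0,1]$, so both $q(T_n,v)=\prod_{i\ge0}q_i^*(T_n,v)$ and the truncated product $\prod_{i=0}^K q_i^*(T_n,v)$ lie in $[0,1]$. Their difference is therefore at most $1$ in absolute value. The total contribution of $v\notin U_n$ to the left-hand side is at most $|T_n\setminus U_n|<\frac{\epsilon}{3}|T_n|$.

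For $u\in U_n$, note that for finite trees the product is actually finite, since $q_i^*=1$ for $i\ge k$. Hence
$$\Big|q(T_n,u)-\prod_{i=0}^K q_i^*(T_n,u)\Big| = \prod_{i=0}^K q_i^*(T_n,u)\Big(1-\prod_{i>K} q_i^*(T_n,u)\Big)\le 1-\prod_{i>K}\big(1-2^{-Ci}\big).$$
We then use the elementary inequality $1-\prod(1-a_i)\le\sum a_i$ for $a_i\in[0,1]$. This bounds the right-hand side by $\sum_{i>K}2^{-Ci}=\frac{2^{-C(K+1)}}{1-2^{-C}}$, which depends only on $C$ and $K$, not on $n$ or $u$. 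We choose $K$ so large that this tail is below $\frac{\epsilon}{3}$. The contribution of $U_n$ is then at most $\frac{\epsilon}{3}|U_n|\le\frac{\epsilon}{3}|T_n|$.

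Adding the two contributions gives a total below $\frac{2\epsilon}{3}|T_n|<\epsilon|T_n|$ for all $n\ge N$, which is the claim. There is no real obstacle beyond Lemma~\ref{lemma: product uni conv}. The only point needing care is that $C$ and $N$ there depend only on $\epsilon$ and not on $n$. This uniformity is what lets a single $K$ work for all $n$. It is guaranteed by the way the lemma is stated, since $r$, and hence $m$, is fixed by $\epsilon$, and $C=C(m)$ comes from Lemma~\ref{lemma: existence of C}.
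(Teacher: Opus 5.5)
Your proof is correct and follows essentially the same route as the paper: split the sum over $U_n$ from Lemma~\ref{lemma: product uni conv}, bound the vertices outside $U_n$ trivially, and control the tail $\prod_{i>K} q_i^*$ uniformly on $U_n$. The only differences are cosmetic: you split $\epsilon$ into thirds instead of halves, and you choose $K$ from the explicit bound $1-\prod(1-a_i)\le\sum a_i$ rather than from convergence of the infinite product $\prod_{i}(1-2^{-Ci})$.
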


\begin{proof}
    We use Lemma~\ref{lemma: product uni conv} to determine $N$ and $K$ in the following way. First, choose $N$ according to Lemma~\ref{lemma: product uni conv} with $\epsilon$ replaced by $\frac{\epsilon}{2}$. Thus there exists a choice of a vertex set $U_n$ for every $n \geq N$ such that, for some positive constant $C$,
    \begin{enumerate}
        \item $|T_n\setminus U_n|\leq \frac{\epsilon}{2}|T_n|,$
        \item $|1-q^*_i(T_n,u)| \leq 2^{-Ci}$ for every $u\in U_n$ and every $i \geq 0$.
    \end{enumerate}
    Since the sum $\sum_{i \geq 1} 2^{-Ci}$ converges, so does $\prod_{i \geq 1} (1-2^{-Ci})$ by the standard convergence criterion for infinite products. Thus we can choose $K>0$ such that
$$1 > \prod_{i = K+1}^{\infty}  \left( 1- 2^{-Ci} \right) > 1-\frac{\epsilon}{2}.$$
    With $N$ and $K$ determined, consider any $n \geq N$. We have
    \begin{align*}
        \Big| \sum_{v \in T_n} q(T_n,v) &-\sum_{v \in T_n} \prod_{i=0}^K q_i^*(T_n,v) \Big| \\ 
        &= \Big| \sum_{v \in U_n} \Big( q(T_n,v)-\prod_{i=0}^K q_i^*(T_n,v) \Big) + \sum_{v \in T_n\setminus U_n} \Big( q(T_n,v)-\prod_{i=0}^K q_i^*(T_n,v) \Big)\Big| \\
        &\leq \Big| \sum_{v \in U_n} \Big( q(T_n,v)-\prod_{i=0}^K q_i^*(T_n,v) \Big) \Big| + \Big| \sum_{v \in T_n\setminus U_n} \Big( q(T_n,v)-\prod_{i=0}^K q_i^*(T_n,v) \Big)\Big|. \\
    \end{align*}
    
    For the first term, we recall that $q(T_n,v)=\prod_{i \geq0} q_i^*(T_n,v)$ and $0<q_i^*(T_n,v)\leq 1$ by definition. Thus,
    \begin{align*}
        \Big| \sum_{v \in U_n} \Big( q(T_n,v)-\prod_{i=0}^K q_i^*(T_n,v) \Big) \Big| &= \Big| \sum_{v \in U_n} \prod_{i=0}^{K} q_i^*(T_n,v) \Big( \prod_{i=K+1}^{\infty}q_i^*(T_n,v)- 1 \Big) \Big| \\
        &= \sum_{v \in U_n} \prod_{i=0}^{K} q_i^*(T_n,v) \Big( 1- \prod_{i=K+1}^{\infty}q_i^*(T_n,v) \Big) \\
        &\leq \sum_{v \in U_n}  \Big( 1- \prod_{i=K+1}^\infty \Big( 1-2^{-Ci} \Big) \Big) \\
        &= |U_n| \Big( 1-\prod_{i=K+1}^\infty \Big( 1-2^{-Ci} \Big) \Big)\\
        &\leq |T_n| \Big( 1- \Big(1-\frac{\epsilon}{2}\Big) \Big) \\
        &= \frac{\epsilon}{2}|T_n|.
    \end{align*}

    For the second term, observe that each summand has absolute value at most $1,$ as both $q(T_n,v)$ and $\prod_{i=0}^K q_i^*(T_n,v)$ take values in $(0,1].$ In addition, the number of summands is $|T_n\setminus U_n|\leq \frac{\epsilon}{2}|T_n|.$ Therefore, the second term is at most $\frac{\epsilon}{2}|T_n|.$

    Combining the two terms yields the desired inequality.
\end{proof}

As the last piece of the proof of \cref{main local conv theorem}, we show the continuity of $q_i^*$ for every fixed $i$, following the arguments in the proof of \cref{continuity of f}.

\begin{lemma}\label{lem:cont_qi}
    Suppose that $(T_n,v^{(n)}) \to (T,v)$ in $\mathcal{RT}.$ Then for any fixed index $i,$ $q_i^*(T_n,v^{(n)}) \to q_i^*(T,v).$
\end{lemma}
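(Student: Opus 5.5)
The plan is to follow the proof of \cref{continuity of f}: fix $i$, take $r$ much larger than $i$, and work only with $n$ large enough that the $r$-ball of $T_n$ around $v^{(n)}$ is isomorphic to the $r$-ball of $T$ around $v$. The issue not present in \cref{continuity of f} is that $q_i^*$ depends on the whole path $v_0,v_1,\ldots$. For finite $T_n$ this path leads to the subtree core $c_n$. For infinite $T$ it leads towards the unique infinite component. So I first show that, for $j\le i$, these paths correspond under the ball isomorphism for as long as the path in $T$ has not stopped. I will also use one monotonicity fact. For finite $T$ and $l<k$, \cref{lemma:N_and_M} gives $M(T,v_l)=N(T_{v_l},v_l)$, where $T_{v_l}$ is the branch at $v_l$ away from the core. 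Since $T_{v_l}\supseteq T_{v_{l-1}}\cup\{v_l\}$, adding $v_l$ to a subtree of $T_{v_{l-1}}$ containing $v_{l-1}$ is injective, so $M(T,v_l)\ge M(T,v_{l-1})$. Hence $M$ is nondecreasing along the path to the core.

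\emph{Case 1: $T$ is finite.} Take $r$ larger than the diameter of $T$. The $r$-ball of $T_n$ then has no vertex at distance exactly $r$ from $v^{(n)}$, so by connectedness $T_n\cong T$ as rooted trees. Hence $q_i^*(T_n,v^{(n)})=q_i^*(T,v)$ for all large $n$.

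\emph{Case 2: $T$ is infinite.} I argue by induction on $j\le i$, assuming the sequence $v_0,\ldots,v_j$ in $T$ has not stopped before $v_j$, so each $v_l$ with $l<j$ has a unique infinite component. The induction claim is that for large $n$ the vertices $v_0^{(n)},\ldots,v_j^{(n)}$ on the path to $c_n$ correspond to $v_0,\ldots,v_j$ under the ball isomorphism.

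\emph{Case 2a: $v_j$ has a unique infinite component.} Let $w$ be the neighbour of $v_j$ in that component. The other branches at $v_j$ are finite, lie inside the ball for large $r$, and have fixed values $n_2,\ldots,n_d$. In $T_n$, the branch at $v_j^{(n)}$ through the image of $w$ reaches distance about $r$, so its count $n_1$ is at least of order $r$.
\begin{itemize}
\item Once $n_1>(1+n_2)\cdots(1+n_d)$, the computation in the proof of \cref{lemma:N_and_M} gives $N(T_n,w^{(n)})>N(T_n,v_j^{(n)})$. So $v_j^{(n)}$ is not a core.
\item $w^{(n)}$ maximises $N(T_u,u)$ over neighbours $u$, so by \cref{lemma:N_and_M} the core lies in its branch. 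This gives $v_{j+1}^{(n)}=w^{(n)}$, which corresponds to $v_{j+1}$.
\item As in \cref{continuity of f}, $M(T_n,v_j^{(n)})=M_r(T,v_j)=M(T,v_j)$. Hence $q_j^*$ agrees for large $n$.
\end{itemize}
There is a tie-breaking subtlety with two cores. The path follows the neighbour of maximal branch count, and $n_1$ strictly exceeds the other branch counts, so the path is still determined.

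\emph{Case 2b: the sequence in $T$ stops at $v_j$.} Then $v_j$ has two infinite components, and $q_l^*(T,v)=1$ for all $l\ge j$. If the path in $T_n$ stops at or before $v_j^{(n)}$, then $q_l^*(T_n,v^{(n)})=1$ for $l\ge j$ and there is nothing to show. Otherwise, the two far-reaching branches give $M(T_n,v_j^{(n)})\ge r$, exactly as in \cref{continuity of f}. By the monotonicity above, $M(T_n,v_l^{(n)})\ge r$ for every $j\le l<k$. Hence $1-q_l^*(T_n,v^{(n)})\le 1/(r+1)$ for all $l\ge j$. Letting $r\to\infty$ gives $q_i^*(T_n,v^{(n)})\to 1=q_i^*(T,v)$ whenever $i\ge j$. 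For $i<j$ the index falls under Case 2a.

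The main obstacle is Case 2a: showing that the path to the finite core locally follows the direction of the infinite component, and in particular that $v_j^{(n)}$ is not itself the core. The route is the inequality $n_1\ge\prod_{l\ge2}(1+n_l)$ from \cref{lemma:N_and_M}, with $r$ chosen large enough, depending on $i$ and on the finite branches of $v_0,\ldots,v_i$, that this holds at every step.
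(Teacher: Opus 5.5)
Your proposal is correct and follows essentially the same route as the paper. The finite case is trivial, and for infinite $T$ there are two possibilities. Either the sequence $v_0,v_1,\ldots$ stops at some $j\le i$, in which case two far-reaching branches force $M(T_n,v_j^{(n)})$ to be large and the monotonicity $q_k^*\le q_{k+1}^*$ finishes the argument. Or it does not stop, in which case the local isomorphism forces $M(T_n,v_i^{(n)})=M(T,v_i)$ for large $n$.

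Your version is slightly more careful than the paper's on two points. You explicitly rule out, via $n_1>\prod_{l\ge 2}(1+n_l)$, that some $v_j^{(n)}$ is itself a subtree core of $T_n$. You also prove the monotonicity that the paper only asserts. The only thing left tacit is the case of infinite $T_n$, where the same argument works with the infinite-component rule in place of \cref{lemma:N_and_M}.
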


\begin{proof}
    If $T$ is of finite order, the proof is trivial as the sequence has to become constant and isomorphic to $(T,v)$ from some $n$ onward. Therefore, we assume $T$ to be a tree of infinite order.
    
\begin{enumerate}
    \item First, we consider the case that $q_i^*(T,v)=1.$ According to the definition of $q_i^*$ on infinite trees, one can find a minimal index $j \leq i$ such that $T - v_j$ has more than one infinite component. We will show that $q_j^*(T_n,v^{(n)}) \to 1$ as $n \to \infty,$ which immediately implies $q_i^*(T_n,v^{(n)}) \to 1$ as $q_k^* \leq q_{k+1}^* \leq 1$ for any index $k.$ Let $\epsilon>0$. By the choice of $j$, we can choose a radius $R$ that is so large that all the following conditions are satisfied:
    \begin{enumerate}
        \item all finite branches of $v_j$ lie entirely inside $T|_{v_j,R}$
        \item each infinite branch of $v_j$ restricted to $T|_{v_j,R}$ has more subtrees containing the root than any finite branch, where the root of a branch is the neighbour of $v_j$ in the branch;
        \item $\frac{1}{R+1}<\epsilon$.
    \end{enumerate}
    Now let $R'=R+j$. Since $(T_n,v^{(n)})$ converges to $(T,v)$ in $\mathcal{RT}$, there exists an index $N$ such that $T_n|_{v^{(n)},R'} \cong T|_{v,R'}$ for all $n \geq N$. Now the second condition guarantees that the sequence $v^{(n)}_1,\ldots,v^{(n)}_j$ in $T_n$ coincides with the sequence $v_1,\ldots,v_j$ in $T$ (to be precise, there is an isomorphism from $T_n|_{v^{(n)},R'}$ to $T|_{v,R'}$ that maps $v^{(n)}_k$ to $v_k$ for all $k \leq j$). This further implies that $T_n|_{v^{(n)}_j,R} \cong T|_{v_j,R}$.
    
    Since $v_j$ has at least two infinite branches, the corresponding branches of $v^{(n)}_j$ contain at least $R$ vertices each. Therefore, in the computation of $M(T_n,v^{(n)}_j)$, after removing one branch of $v^{(n)}_j$, there are still at least $R$ subtrees containing $v^{(n)}_j$. Therefore, $M(T_n,v^{(n)}_j) \geq R$ and $1- q_j^*(T_n,v^{(n)}) \leq \frac{1}{R+1}<\epsilon$ for all $n \geq N$. This completes the proof in the first case.
    
\item Next, assume that $q_i^*(T,v)<1$. As $T$ is of infinite order, this means that $v_i$ is well-defined, and $T - v_i$ has only one infinite component. As in the first case, one can find an $R$ that is so large that all finite branches of $v_i$ lie inside $T|_{v_i,R}$ and that the infinite branch of $v_i$ restricted to $T|_{v_i,R}$ has more subtrees containing the root than any finite branch. Therefore, we can again conclude that for large enough $n$, the $R$-ball in $T_n$ centred at $v_i^{(n)}$ is isomorphic to the $R$-ball in $T$ centred at $v_i$. Moreover (still for sufficiently large $n$), in determining $M(T_n,v^{(n)}_i)$, one always removes the branch of $v_i^{(n)}$ that corresponds to the infinite branch of $v_i$ in $T$. These two observations together imply that $M(T_n,v^{(n)}_i)=M(T,v_i)$, thus $q_i^*(T_n,v^{(n)})=q_i^*(T,v)$, for all sufficiently large $n$. \qedhere
\end{enumerate}
\end{proof}

An immediate consequence of Lemma~\ref{lem:cont_qi} is the following. Set $g_K(T_n,v)\coloneqq \prod_{i=0}^K q_i^*(T_n,v)$. Since $q_i^*$ is continuous with values in the interval $(0,1]$ for every fixed $i$ and $g_K$ is a finite product of $q_i^*$s, $g_K$ is also continuous and takes values in $(0,1]$. Now, by the same argument as in the previous section, we have
$$\frac{1}{|T_n|} \sum_{v\in T_n} g_K(T_n,v)=\mathbb{E}_{\sigma_n}(g_K) \to \mathbb{E}_{\sigma}(g_K)=:L_{\sigma,K},$$
where $\sigma_n$ is the uniform measure associated with $T_n$ and $\sigma$ is the limit measure associated with the Benjamini--Schramm limit. In other words, the $\sigma_n$-expectation of $g_K$ converges to the limit $L_{\sigma,K}$, which depends only on $\sigma$ and $K$. Observe that $L_{\sigma,K} = \mathbb{E}_{\sigma}(g_K)$ is bounded and $\mathbb{E}_{\sigma}(g_K)\ge \mathbb{E}_{\sigma}(g_{K+1})$ for all $K \geq 0$. Therefore, $L_{\sigma,K}$ converges, as $K \to \infty$, to a limit $L_\sigma$ that only depends on $\sigma$. We now show that $D(T_n)$ has to converge to this number $L_{\sigma}$, completing the proof of our second main result. 

\begin{proof}[Proof of Theorem~\ref{main local conv theorem}]
We prove that $\mathbb{E}_{\sigma_n}(q)=\frac{1}{|T_n|} \sum_{v\in T_n} q(T_n,v)$ converges to $L_\sigma$. The statement then follows by Lemma \ref{lemma: p and q close enough}. 
    
    Let $\epsilon>0$. By the definition of $L_{\sigma}$ and Lemma \ref{lemma: finite sum good enough}, we can find $K_0$ and $N_0$ such that for $k\ge K_0$ and $n\ge N_0$, we have
    \begin{enumerate}
        \item $\left| L_{\sigma,k}- L_\sigma \right|< \frac{\epsilon}{3},$
        \item $\left| \mathbb{E}_{\sigma_n}(q)- \mathbb{E}_{\sigma_n}(g_k) \right|< \frac{\epsilon}{3}.$ 
    \end{enumerate}
    Now, fix any $k\ge K_0$. As $\mathbb{E}_{\sigma_n}(g_k) \to L_{\sigma,k},$ there exists $N\ge N_0$ such that for all $n\ge N$, we have
    $$
        \left| \mathbb{E}_{\sigma_n}(g_k) - L_{\sigma,k} \right|<\frac{\epsilon}{3}.
    $$
    Putting all three inequalities together, we have
    \begin{align*}
        \left| \mathbb{E}_{\sigma_n}(q)-L_\sigma \right| &\leq \left| \mathbb{E}_{\sigma_n}(q)- \mathbb{E}_{\sigma_n}(g_k) \right| + \left| \mathbb{E}_{\sigma_n}(g_k) - L_{\sigma,k} \right| 
        + \left| L_{\sigma,k}- L_\sigma \right| \\
        &< \frac{\epsilon}{3}+\frac{\epsilon}{3}+\frac{\epsilon}{3} = \epsilon,
    \end{align*}
    for all $n\ge N$. Since $\epsilon$ was arbitrary, the desired statement follows.
\end{proof}

\subsection{Examples}\label{sec:examples2}

We take a look at the examples from Section \ref{sec:number}. As \cref{main local conv theorem} does not apply to the sequence of paths, we start with combs.

\paragraph{Combs.}

Recall that the comb $C_n$ is obtained by attaching one leaf to each of the vertices of a path $P_n$. It was mentioned in Section \ref{sec:examples1}, we know that the number of subtrees of $C_n$ is $N(C_n)=2^{n+2}-n-4$. One way to obtain this formula is by grouping subtrees according to the length of the subpath they induce on the original path. For every $m \leq n$, there are $n-m+1$ such subpaths, each of which can be extended to a subtree in $2^m$ ways. The total number of vertices of all these $2^m$ subtrees is
$$
    m\cdot 2^m+\sum_{i=0}^m i\binom{m}{i}=3m\cdot 2^{m-1}.
$$
Summing over $m$ and adding the $n$ subtrees that only consist of a single leaf yields the following formula for the total number of vertices in all subtrees, which is denoted by $R(C_n)$:
$$
    R(C_n)=n+\sum_{m=1}^n (n-m+1)3m\cdot 2^{m-1} = 3(n-2) 2^{n+1} + 4(n+3), thus
$$
and
$$
    D(C_n)=\frac{\mu(C_n)}{2n}=\frac{\frac{R(C_n)}{N(C_n)}}{2n}=\frac{3(n-2) 2^{n+1} 4(n+3)}{2n(2^{n+2}-n-4)}.
$$
This implies immediately that $D(C_n)\to \frac34$ as $n\to \infty.$

On the other hand, the limit measure $\sigma$ corresponding to the infinite comb $C_\infty$ has probability $\frac12$ for both a leaf-vertex and a path-vertex. If $v$ is a leaf, $M(C_\infty,v)=1$ and $q(C_\infty,v)=\prod_{i\geq1} q^*_i(C_\infty,v)=\frac12,$ otherwise, $q(C_\infty,v)=1$ as $v$ has two branches of infinite vertices. Therefore, $\mathbb{E}_{\sigma}(q)=\frac34,$ in agreement with \cref{main local conv theorem}.

\paragraph{Regular trees (Bethe trees).}

We make use of our earlier considerations in Section~\ref{sec:examples1}. Let us again write $B_k$ for the complete binary tree of height $k$, and note that $RT_n^3$ is obtained by attaching the roots of three copies of $B_{n-1}$ to a common vertex, which is the centre (and also the subtree core) of $RT_n^3$. 

Let the root of $B_k$ be denoted by $v_k$, and recall from the discussion in Section~\ref{sec:examples1} that $m_k = N(B_k,v_k)$ satisfies the recursion $m_k = (m_{k-1}+1)^2$ with $m_0 = 1$. We found that in the infinite canopy tree $RT^3$, which represents the Benjamini--Schramm limit, $M(RT^3,x_k) = m_k$, so we can already compute the value of 
$\mathbb{E}_{\sigma}(q)$. It is
\begin{align*}
    \mathbb{E}_{\sigma}(q) &= \sum_{k \geq 0} 2^{-k-1} \prod_{j\ge 0} q_j^*(RT^3,x_k)\\
    &= \sum_{k \geq 0} 2^{-k-1} \prod_{i\ge k} \frac{M(RT^3,x_i)}{M(RT^3,x_i)+1} \\
    &= \sum_{k \geq 0} 2^{-k-1} \prod_{i\ge k} \frac{m_i}{m_i+1} \\
    &\approx 0.6289684152.
\end{align*}
For comparison, we calculate the limit of the subtree density of $RT_n^3$ as $n \to \infty$. To this end, we need to  determine the total number of vertices in the subtrees of $B_k$ that contain $v_k$, which is denoted by $r_k = R(B_k,v_k)$. By the same bijective correspondence that gave us~\eqref{subtree_recursion}, it is readily seen that $r_n$ satisfies the recursive relation
\begin{equation}\label{eq:rn-rec}
    r_k=2r_{k-1}(m_{k-1}+1)+(m_{k-1}+1)^2.
\end{equation}
Let $w_n$ be the centre of $RT_n^3$. We know from Section~\ref{sec:examples1} that
$$N(RT_n^3,w_n) = (1 + m_{n-1})^3$$
and
\begin{equation}\label{eq:NRT_n}
    N(RT_n^3) = (1+m_{n-1})^3 + \sum_{k=0}^{n-1} 3 \cdot 2^{n-k-1} m_k.
\end{equation}
As mentioned before, subtrees that do not contain $w_n$ are asymptotically negligible due to the rapid growth of the sequence $m_k$. We also have (in the same way as~\eqref{eq:rn-rec})
$$R(RT_n^3,w_n)=3r_{n-1}(m_{n-1}+1)^2+(m_{n-1}+1)^3$$
and thus
$$\mu(RT_n^3) = \frac{R(RT_n^3)}{N(RT_n^3)} \sim \frac{R(RT_n^3,w_n)}{N(RT_n^3,w_n)} = \frac{3r_{n-1}}{m_{n-1}+1} +1$$
Since $\abs{RT_n^3} = 3 \cdot 2^n - 2$, it follows that
$$D(RT_n^3) = \frac{\mu(RT_n^3)}{\abs{RT_n^3}} \sim \frac{r_{n-1}}{2^n m_{n-1}}.$$
Iterating the recursive relations for $m_n$ and $r_n$, we have
\begin{align*}
    \frac{r_{n-1}}{2^n \cdot m_{n-1}} &= \frac{2r_{n-2}(m_{n-2}+1)+(m_{n-2}+1)^2}{2^n \cdot (m_{n-2}+1)^2} \\
    &= \frac{r_{n-2}}{2^{n-1} \cdot (m_{n-2}+1)} + 2^{-n}\\
    &= \frac{r_{n-2}}{2^{n-1} \cdot m_{n-2}} \cdot \frac{m_{n-2}}{m_{n-2}+1} + 2^{-n}\\
    &= \left( \frac{r_{n-3}}{2^{n-2} \cdot m_{n-2}} \cdot \frac{m_{n-3}}{m_{n-3}+1} + 2^{-n+1} \right) \cdot \frac{m_{n-2}}{m_{n-2}+1} + 2^{-n}\\
    &\qquad \vdots \\
    &= 2^{-1}\prod_{i=0}^{n-1} \frac{m_i}{m_i+1} + 2^{-2} \prod_{i=1}^{n-1} \frac{m_i}{m_i+1} + \cdots + 2^{-n+1}\frac{m_{n-2}}{m_{n-2}+1} + 2^{-n}\\
    &= \sum_{k = 0}^{n-2} 2^{-k-1} \prod_{i = k}^{n-2} \frac{m_i}{m_i+1} + 2^{-n}.
\end{align*}
As $n \to \infty$, this tends to
$$\sum_{k \geq 0} 2^{-k-1} \prod_{i\ge k} \frac{m_i}{m_i+1},$$
which is exactly the expression we derived for $\mathbb{E}_{\sigma}(q)$, again in agreement with \cref{main local conv theorem}. 

\section{The set of possible values}\label{sec:values}

Having proved convergence of the quantities $\frac{\log N(T)}{|T|}$ and $D(T)$ under suitable conditions, it is natural to ask what possible values these can take. This question will be answered in our final section. Since the set of series-reduced trees (trees without vertices of degree $2$) often plays a special role in the literature on the subtree density and related quantities, we always provide results for both arbitrary trees and series-reduced trees. Note that a Benjamini--Schramm convergent sequence of series-reduced trees always satisfies the technical condition of Theorem~\ref{main local conv theorem}. In fact, proving Theorem~\ref{main local conv theorem} would be considerably easier for series-reduced trees. A lot of the technicalities are due to the potential presence of long path segments.

Let us now start with the values of the quantity in our first main theorem.

\begin{proposition}\label{prop:ratio_dense}
    The ratio $\frac{\log_2(N(T))}{\abs{T}}$ only takes values in $[0,1]$ for arbitrary trees, and in $[\frac12,1]$ for series-reduced trees. The values are dense in the respective intervals.
\end{proposition}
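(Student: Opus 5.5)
The proof has two parts: the bounds, and density in each interval.

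\emph{Bounds.} For any tree, $N(T) \ge 1$ gives $\log_2 N(T)\ge 0$. Every subtree is a nonempty vertex subset, so $N(T)\le 2^{|T|}-1<2^{|T|}$, and the ratio lies in $[0,1]$.

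For a series-reduced tree $T$ with $L$ leaves, the degree-sum argument from the proof of Lemma~\ref{lemma: existence of C} gives $L\ge |T|/2+1$ once $|T|\ge 3$: with no vertices of degree 2, $2(|T|-1)\ge L+3(|T|-L)$. Now fix one internal vertex $v$, which exists for $|T|\ge 3$. Every set consisting of all internal vertices plus an arbitrary subset of the leaves induces a subtree. Hence $N(T)\ge 2^{L}\ge 2^{|T|/2}$, and the ratio is at least $\frac12$. The tiny cases $|T|\le 2$ can be checked directly: $K_1$ gives $0$ and $K_2$ gives $\log_2 3/2$. So the statement should be read for $|T|\ge 3$, or with these cases excluded; I will note this in the proof.

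\emph{Density.} The plan is to realise any target $x$ in the relevant interval as a limit along a Benjamini--Schramm convergent family, using the examples of Section~\ref{sec:examples1} or direct counting. I will use two-parameter "combs with stars".

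For arbitrary trees, take a path $P_k$ and attach $a$ leaves to every $s$-th vertex of the path. Subtrees containing the whole spine number roughly $2^{a\lceil k/s\rceil}$. Subtrees not containing the whole spine are at most $O(k^2)\,2^{a k/s}$ in number, so they change $\log_2 N$ by only $O(\log k)$. Letting $k\to\infty$ with $a,s$ fixed, the ratio tends to
\[
\frac{a/s}{1+a/s}=\frac{a}{s+a}.
\]
These values are dense in $[0,1]$.

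For series-reduced trees, use a caterpillar with spine $P_k$ where every spine vertex gets at least one leaf; the interior spine vertices then have degree at least $3$ once each has $\ge 1$ leaf. Spine vertex $j$ gets $a_j\in\{1,2\}$ leaves, say periodically, with $b$ out of every $s$ vertices receiving two leaves. The dominant count is $2^{\sum a_j}$, and the number of vertices is $k+\sum a_j$. The ratio tends to
\[
\frac{1+b/s}{2+b/s}.
\]
This covers only $[\frac12,\frac23]$. To go higher, let each spine vertex get $t$ leaves for $b$ out of every $s$ spine vertices, and one leaf otherwise. With $\alpha=\bigl(s+b(t-1)\bigr)/s$ the average leaves per spine vertex, the ratio tends to $\alpha/(1+\alpha)$. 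Since $\alpha$ ranges densely over $[1,\infty)$, the values are dense in $[\frac12,1)$.

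The main point needing care is the claim that the non-spanning-spine subtrees are negligible. I will bound them as follows: a subtree missing an end of the spine is determined by a subpath, giving at most $k^2$ choices, together with leaf choices along it, which is at most the full leaf product. So the total is at most $(k^2+1)$ times the dominant term, and dividing $\log_2$ of this by $|T|$ kills the $\log k$ correction.
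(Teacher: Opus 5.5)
Your argument is correct, and for the density part it takes a different route from the paper. Your bounds are essentially the paper's: $1\le N(T)\le 2^{|T|}$, and for series-reduced trees the degree-sum count of leaves together with the $2^{L(T)}$ subtrees made of all internal vertices plus any set of leaves. Your caveat about tiny trees is right and applies to the paper's proof as well, though only $K_1$ is a genuine exception, since $K_2$ gives $\frac{\log_2 3}{2}\in[\frac12,1]$.

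\textbf{Comparison of the density arguments.} The paper uses a single cluster of leaves whose size grows linearly in $n$. For arbitrary trees this is a broom with $\lfloor cn\rfloor$ leaves at one end of a path. For series-reduced trees it is a comb with a star of $\lfloor xn\rfloor$ leaves attached at one end. Exact subtree counts give the limits $c$ and $\frac{1+x}{2+x}$, so every point of the open interval is the limit of one explicit family.

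You instead spread the leaves periodically along a spine with fixed parameters. This gives only the rational limits $\frac{a}{s+a}$ and $\frac{\alpha}{1+\alpha}$, which is enough for density. You also replace exact enumeration by the sandwich $2^{A}\le N(T)\le \bigl(\binom{k+1}{2}+1\bigr)2^{A}$, where $A$ is the number of pendant leaves; this is simpler and more robust.

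Your route has a real advantage. Your trees have bounded degree and converge in the Benjamini--Schramm sense to periodic infinite caterpillars. The paper's brooms and stars have unbounded degree and do not converge, because a positive fraction of the mass escapes through the hub. So your construction also shows that the limiting constants of Theorem~\ref{thm:BSconstant_general} are dense.

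\textbf{Details to repair.} In the series-reduced caterpillar you only checked the interior spine vertices. A spine endpoint that receives a single pendant leaf has degree $2$, so with $a_j\in\{1,t\}$ the tree need not be series-reduced. Give each end of the spine at least two leaves, for example one extra leaf at each end, as the paper does at the left end of its comb. This changes $|T|$ and $\log_2 N(T)$ by $O(1)$ and leaves the limit $\frac{\alpha}{1+\alpha}$ unchanged. Likewise, your count of subtrees not containing the whole spine should include the at most $|T|$ single-leaf subtrees that avoid the spine; this is harmless.
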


\begin{proof}
Clearly, $1 \leq N(T) \leq 2^{\abs{T}}$, so the ratio has to lie in the interval $[0,1]$. Now consider a broom consisting of a path with $p$ vertices, to which $k$ leaves are attached at one end. Let this end be $v$. Then we have
\begin{itemize}
\item $2^kp$ subtrees that contain $v$, and
\item $k + \binom{p}{2}$ subtrees that do not contain $v$.
\end{itemize}
For a constant $c \in (0,1)$, set $k = \lfloor cn \rfloor$, $p = n - k$, and call the resulting broom $T_n$. We have
$$\frac{\log_2(N(T_n))}{\abs{T_n}} \sim \frac{k}{n} \to c$$
as $n \to \infty$, showing that the values of the ratio are indeed dense in $[0,1]$.

For series-reduced trees, a simple application of the degree sum formula shows that at least half the vertices are leaves: $L(T) \geq \frac{\abs{T}}{2}$. Considering only those subtrees consisting of all internal vertices and any subset of leaves, we have the trivial lower bound $N(T) \geq 2^{L(T)} \geq 2^{\abs{T}/2}$, so that $\frac12$ becomes a lower bound for the ratio $\frac{\log_2(N(T))}{\abs{T}}$. To show that the values lie dense in the interval $[\frac12,1]$, consider the concatenation of a comb $C_n$, as in our earlier examples, with an additional leaf attached at the left end and a star with $m = \lfloor xn \rfloor$ leaves (for some constant $x$) attached at the right end (connecting the centre of the star to the end of the comb). One can compute the number of subtrees of the resulting tree $T_n'$ to be
$$N(T_n') = 3 \cdot 2^{n+m} + 3 \cdot 2^{n+1} - 2^{m}+ m - n -5 \sim 3 \cdot 2^{n+m},$$
thus $\log_2(N(T_n')) \sim n+m$, while $\abs{T_n'} = 2n+m+2$. Since $\frac{m}{n} \to x$ as $n \to \infty$, it follows that
$$\frac{\log_2(N(T_n'))}{\abs{T_n'}} \to \frac{1+x}{2+x},$$
and this rational function takes all values in $(\frac12,1)$ as $x \in (0,\infty)$.
\end{proof}

Next, we consider the subtree density. As mentioned in the introduction, we have $\frac13 \leq D(T) \leq 1$ for every tree $T$ (as shown by Jamison \cite{jamison1983average}), and the discussion at the beginning of Section~\ref{sec:density} shows that the set of possible values is indeed dense in the interval. For series-reduced trees, the stronger bounds $\frac12 \leq D(T) \leq \frac34$ have been established by Vince and Wang \cite{VW10}. We show that the values of the subtree density are again dense.

\begin{proposition}
    The subtree density $D(T)$ only takes values in $[\frac13,1]$ for arbitrary trees, and in $[\frac12,\frac34]$ for series-reduced trees. The values are dense in the respective intervals.
\end{proposition}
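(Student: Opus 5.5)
The bounds themselves are known results: Jamison \cite{jamison1983average} for $[\frac13,1]$ and Vince and Wang \cite{VW10} for $[\frac12,\frac34]$. Density in $[\frac13,1]$ for arbitrary trees was already shown at the beginning of Section~\ref{sec:density}. There the double brooms with a path of $\lfloor x2^n\rfloor$ vertices and $n$ leaves at each end have densities tending to $\frac{x^2+6x+6}{3x^2+12x+6}$, which sweeps out $(\frac13,1)$ as $x$ ranges over $(0,\infty)$. So the only new content is density in $[\frac12,\frac34]$ for series-reduced trees. I would construct a one-parameter family of sequences of series-reduced trees whose limiting densities vary continuously between the two endpoints.

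The natural tool is \cref{main local conv theorem}. Any Benjamini--Schramm convergent sequence of series-reduced trees satisfies \eqref{eq:no_long_paths}, so $D(T_n)\to\mathbb{E}_\sigma(q)$, and $q$ is computable on the limit via the product \eqref{eq:q-product}. The endpoints suggest the building blocks. Combs (made series-reduced by adding one leaf at each end, which does not change the local limit) give $\frac34$. Stars have density tending to $\frac12$: a random subtree is essentially the centre together with a uniform random subset of the leaves.

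The family I would use is a \emph{caterpillar with multiplicity}. Take a path of length $n$ and attach $a_j\in\{1,k\}$ leaves to the $j$-th vertex, with $a_j=k$ for a fixed proportion $\alpha$ of the vertices. Arrange these periodically, e.g.\ in blocks, and add extra end leaves so the tree is series-reduced. In the limit, an internal vertex has two infinite branches, so $q=1$. A leaf $v$ has $M(T,v)=1$, hence $q=\frac12$. Therefore
$$\mathbb{E}_\sigma(q)=\frac{\#\text{spine}+\tfrac12\#\text{leaves}}{\#\text{spine}+\#\text{leaves}}.$$
Per spine vertex there are on average $(1-\alpha)+\alpha k$ leaves, so the limit is $\frac{1+\frac12 L}{1+L}$ with $L=(1-\alpha)+\alpha k$. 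As $L$ ranges over rationals in $[1,\infty)$, this value takes $\frac34$ at $L=1$, tends to $\frac12$ as $L\to\infty$, and is continuous and monotone in $L$. Its values are therefore dense in $(\frac12,\frac34]$. Even simpler, one may take every spine vertex with exactly $k$ leaves, giving $\frac{2+k}{2(1+k)}$, and then interpolate by mixing values of $k$ to reach non-isolated values.

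The main obstacle is justifying the Benjamini--Schramm limit and the application of the theorem. I need to check that the sequence converges and that the limit measure assigns the correct proportions of spine and leaf roots. A periodic pattern handles this, since boundary effects are $O(1/n)$. Alternatively, I could avoid the theorem entirely and compute directly, as for combs. Here I would group subtrees by the spine subpath they induce: a subpath with leaf counts $a_j$ extends in $\prod_j 2^{a_j}$ ways, and on average half the leaves are included. Subtrees inducing the whole or a near-whole spine dominate exponentially, which gives $\mu\approx n+\frac12\sum_j a_j$ up to lower-order terms and hence the same limit. Given the earlier theorem, I would cite it and only remark that the limit is a bi-infinite caterpillar in which every spine vertex has two infinite branches.
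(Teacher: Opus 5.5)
Your proof is correct, but for the series-reduced case it takes a different route from the paper. The paper does not use \cref{main local conv theorem} here. It reuses the tree $T_n'$ from \cref{prop:ratio_dense}: a comb with an extra end leaf, joined to a star with $m=\lfloor xn\rfloor$ leaves. It then computes the total subtree size $R(T_n')$ in closed form and divides by $\abs{T_n'}N(T_n')$, obtaining the limit $\frac{3+x}{4+2x}$ for real $x>0$.

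That family is actually not Benjamini--Schramm convergent. The star centre has unbounded degree, so the star leaves, a positive proportion of all vertices, have $2$-balls with no finite limit. The main theorem therefore could not have been applied to it.

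Your choice of bounded-degree periodic caterpillars is what makes \cref{main local conv theorem} usable. Once it applies, the whole computation reduces to $q=1$ on spine vertices and $q=\frac12$ on leaves, and your justification of convergence and of condition \eqref{eq:no_long_paths} is fine. Both families in fact follow the same rule of thumb, $D\approx(\#\text{internal}+\tfrac12\#\text{leaves})/\abs{T}$. For $T_n'$ this gives $L\approx 1+x$, and then $\frac{2+L}{2(1+L)}=\frac{3+x}{4+2x}$.

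Your route is essentially computation-free and illustrates the main theorem. The paper's route is self-contained, avoiding the machinery of Section~\ref{sec:density}, and has a continuous parameter. You only reach rational $L$, but density is all that is needed.

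One point deserves a sentence in a write-up. The statement of \cref{main local conv theorem} only asserts convergence to \emph{some} constant. The identification of that constant as $L_\sigma=\lim_K\mathbb{E}_\sigma(g_K)=\mathbb{E}_\sigma(q)$ comes from its proof. In your example this is harmless, since $g_K=q$ holds $\sigma$-almost surely for every $K$.
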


\begin{proof}
We only need to consider series-reduced trees. We can use the same sequence $T_n'$ of trees as in the proof of Proposition~\ref{prop:ratio_dense}. Again, one can compute the total number of vertices in all subtrees exactly: it is given by
$$R(T_n') = 2^{m+n-1}(9n+3m+2) + (9n-13)2^n - 2^{m-1} (m-4) + (4n+m+14) \sim 2^{m+n-1}(9n+3m).$$
Dividing by $\abs{T_n'}N(T_n')$ as given in the proof of \cref{prop:ratio_dense} and plugging in $m = \lfloor xn\rfloor$, we find that
$$\lim_{n \to \infty} D(T_n') = \frac{3+x}{4+2x},$$
and this rational function takes all values in $(\frac12,\frac34)$ as $x \in (0,\infty)$.
\end{proof}

Lastly, we consider the probability $p(T,v)$ that a random subtree contains a given vertex $v$. We have the following result.

\begin{proposition}\label{prop:pTv_dense}
   For arbitrary trees $T$, the values of $p(T,v)$ are dense in the interval $[0,1]$. For series-reduced trees $T$ with $\abs{T} > 2$, $p(T,v)$ has to lie in $[\frac13,\frac12]$ if $v$ is a leaf, and in $[\frac23,1]$ otherwise. The possible values are dense in the respective intervals.
\end{proposition}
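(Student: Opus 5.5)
The plan has three parts: the bounds for series-reduced trees, density for series-reduced trees, and density for arbitrary trees. Throughout, I work with the quantities $N(T,v)$ and $N(T)-N(T,v)$ and the recursion \eqref{subtree_recursion}.

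\emph{Bounds for series-reduced trees.} Let $v$ be a leaf of a series-reduced tree with $\abs{T}>2$, and let $u$ be its neighbour, with $a = N(T-v,u)$ subtrees of $T-v$ containing $u$. Then $N(T,v)=1+a$. The subtrees avoiding $v$ are exactly the subtrees of $T-v$, so $N(T)=1+a+N(T-v)$, giving
$$p(T,v)=\frac{1+a}{1+a+N(T-v)}.$$
The upper bound $\frac12$ is equivalent to $N(T-v)\ge 1+a$. This holds because the subtrees of $T-v$ that contain $u$ number $a$, and there is at least one more subtree, a single leaf, since $T-v$ has other vertices. The lower bound $\frac13$ is equivalent to $N(T-v)\le 2(1+a)$, i.e.\ subtrees of $T-v$ avoiding $u$ number at most $a+2$. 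I would prove this by writing $u$'s branches in $T-v$ as $T_1,\dots,T_d$ with $d\ge 2$ (series-reduced) and root counts $n_i$. Then $a=\prod(1+n_i)$, while the subtrees avoiding $u$ number $\sum_i N(T_i)$. Using $N(T_i)\le N(T_i,\text{root})+N(T_i\setminus\text{root stuff})$ is hard, so instead I would argue by induction on $\abs{T}$, or via the map that sends a subtree avoiding $u$ to the subtree obtained by adding the path to $u$. \textbf{This is the step I expect to be the main obstacle.} The case where branches are leaves is tight, e.g.\ the star gives exactly $\frac13$ asymptotically.

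For an internal vertex $v$, the target $p\ge\frac23$ means $N(T,v)\ge 2(N(T)-N(T,v))$. Each subtree avoiding $v$ lies in one branch $T_i$ and, extended by the path to $v$, yields a subtree containing $v$. The idea is to show this extension map is at most one-to-one, with room to spare by a factor of $2$, using the fact that $v$ has at least $3$ branches: the other branches offer at least $2\cdot 2$ choices. I would do this by comparing $\sum_i N(T_i)$ with $\prod(1+n_i)$, using the leaf bound just proved inside each branch to get $N(T_i)\le 3 n_i$ or something similar.

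\emph{Density for series-reduced trees.} For leaves, I would use a star $K_{1,k}$: a leaf there has $p=2^{k-1}/(2^k+k)\to\frac12$. For values in $(\frac13,\frac12)$, I would take $v$ a leaf adjacent to the centre of a star of $k$ leaves, where the centre is also attached to a large comb-like series-reduced part, and tune the relative sizes so that $N(T-v)/(1+a)$ ranges over $(1,2)$. For internal vertices, the centre of the double-star/broom constructions $T_n'$ gives values near $1$. For values in $(\frac23,1)$, I would take $v$ adjacent to a star part and a comb part with tunable sizes $m=\lfloor xn\rfloor$, as in Proposition~\ref{prop:ratio_dense}, and compute the limit as a rational function of $x$ with the right range.

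\emph{Arbitrary trees.} The brooms of Proposition~\ref{prop:ratio_dense} already suffice. Take a path with $p$ vertices and $k$ leaves at one end $v$, with $p$ of order $2^{k}$, say $p=\lfloor x2^k\rfloor$. Then a path vertex at relative position $t$ has $p(T,\cdot)\approx$ a continuous function of $t$ and $x$ ranging over $(0,1)$, and varying $t,x$ gives density in $[0,1]$.
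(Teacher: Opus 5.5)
Your upper bound $p(T,v)\le\frac12$ for leaves is fine, and so is your broom argument for arbitrary trees, which is a variant of the paper's broom using $p\approx x2^k$ path vertices instead of $p=o(2^k)$. The gap is in the two lower bounds for series-reduced trees, which carry the content of the proposition.

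For leaves, you leave $N(T-v)\le 2(1+a)$ unproved. Also, the star is not extremal: a leaf of $K_{1,k}$ has $p=(2^{k-1}+1)/(2^k+k)\to\frac12$, and the value $\frac13$ is approached by a leaf at the end of a long comb whose other end is heavy.

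For internal vertices, the sketch fails as stated, for two reasons.
\begin{itemize}
\item The extension map $S\mapsto S\cup(\text{path to }v)$ is not injective. For a leaf $x$ at distance at least $2$ from $v$ with neighbour $y$, the subtrees $\{x\}$ and $\{x,y\}$ have the same image.
\item Per-branch bounds such as $N(T_i)\le 3n_i$ are too weak, and so is the bound $N(T_i)\le 2n_i+2$ that your leaf inequality yields when applied to $T_i+v$. If $v$ has two leaf branches and a third branch $T_3$, the target $N(T,v)\ge 2\overline{N}(T,v)$ is exactly $N(T_3)\le 2n_3$.
\end{itemize}

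The paper does not prove these bounds itself. It imports the sharp inequalities $\overline{N}(T,v)\le 2N(T,v)-4$ for leaves and $2\overline{N}(T,v)\le N(T,v)-2$ for internal vertices from \cite{CW26}.

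To make your route self-contained, you need the rooted bound $N(T_i)\le 2n_i-1$ for every branch $T_i$ at a vertex of a series-reduced tree. More precisely, $N(T_i)=n_i=1$ if the root $w_i$ is a leaf, and $N(T_i)\le 2n_i-2$ otherwise. It follows by induction on $\abs{T_i}$ from $\sum_{j=1}^{d}(2n_j-1)\le\prod_{j=1}^{d}(1+n_j)-2$ for $d\ge 2$; for $d=2$ this is $(n_1-1)(n_2-1)\ge 0$. With it:
\begin{itemize}
\item the leaf case gives $N(T-v)\le 2a-2$, hence $p(T,v)>\frac13$;
\item the internal case reduces to $2\sum_i(2n_i-1)\le\prod_i(1+n_i)-2$ for $d\ge 3$. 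This is an equality at $d=3$, $n_1=n_2=n_3=1$, and the right-hand side only gains as the $n_i$ or $d$ increase.
\end{itemize}

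Your density constructions for series-reduced trees also fail to produce a continuum. Unlike $\log N(T)/\abs{T}$ and $D(T)$, the probability $p(T,v)$ is not normalised by $\abs{T}$. So letting two parts grow proportionally ($m=\lfloor xn\rfloor$) does not help: one exponentially large count swamps the other. In the trees $T_n'$, no limiting value of $p$ depends on $x$; for example, $p\to 1$ at the star centre and at spine vertices far from the left end of the comb.

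Likewise, in your leaf construction a long comb $C$ rooted at an end satisfies $N(C)/N(C,\mathrm{root})\to 2$. Hence $p(T,v)\to 1/(2+2^{1-k})$, which gives only isolated values as the star size $k$ varies.

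What is missing is a way to encode an arbitrary real parameter in the structure between $v$ and the heavy part of the tree. The paper uses a caterpillar whose spine vertices $v_1,\dots,v_{t+1}$ carry $d_1+1,d_2,\dots,d_t$ and $k\to\infty$ leaves. Here $c_i=d_1+\dots+d_i$ are the positions of the binary digits of $x=\frac2q-2\in(0,1)$. Then $p(T,v_1)\to 2/(2+\sum_{i\le t}2^{-c_i})$, which approximates any $q\in(\frac23,1)$. A leaf adjacent to $v_1$ has half this probability, which covers $(\frac13,\frac12)$.
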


\begin{proof}
We can first consider brooms as in the proof of Proposition~\ref{prop:ratio_dense}. Note that the proportion of subtrees that contain the vertex $v$ (where the leaves are attached to the path) tends to $1$, so we only need to consider subtrees containing $v$. The vertex at distance $d$ from $v$ on the path is contained in $\frac{p-d}{p}$ of these subtrees. The values of this quotient are dense in $[0,1]$, so the first statement follows.

For series-reduced trees, we use some results from~\cite{CW26}: letting $\overline{N}(T,v) = N(T) - N(T,v)$, it is shown there that $2N(T,v) - 4 \geq \overline{N}(T,v)$ if $v$ is a leaf, which implies that
$$p(T,v) = \frac{N(T,v)}{N(T)}  = \frac{N(T,v)}{N(T,v) + \overline{N}(T,v)} \geq \frac{N(T,v)}{N(T,v) + 2N(T,v) - 4} \geq \frac13.$$
On the other hand, $p(T,v) \leq \frac12$ holds for every tree $T$ with at least three vertices (series-reduced or not) if $v$ is a leaf: this is because for every subtree that contains $v$ and is not just the single vertex $v$, one can obtain another subtree by removing $v$. This already yields $\overline{N}(T,v) \geq N(T,v) - 1$, and by considering any subtree that contains neither $v$ nor its neighbour, one can strengthen the inequality to $\overline{N}(T,v) \geq N(T,v)$. The bound $p(T,v) \leq \frac12$ follows.

For an internal vertex $v$ of a series-reduced tree, the inequality $N(T,v) -2 \geq 2\overline{N}(T,v)$ is proven in~\cite{CW26}: from this, we obtain
$$p(T,v) = \frac{N(T,v)}{N(T)}  = \frac{N(T,v)}{N(T,v) + \overline{N}(T,v)} \geq \frac{N(T,v)}{N(T,v) + \frac{N(T,v) - 2}{2}} \geq \frac23.$$

It remains to show that the values are dense in the respective intervals. For a value $q \in (\frac23,1)$, set $q=\frac{2}{2+x}$, i.e. $x=\frac2{q} -2$. Note that $x \in(0,1)$. Now $x$ has a binary representation
$$x = 2^{-c_1}+2^{-c_2}+2^{-c_3}+\cdots,$$
where $c_1<c_2<c_3< \ldots$ is an increasing (possibly infinite) sequence of positive integers.

    Let $d_1=c_1$ and $d_i=c_{i}-c_{i-1}$ for $i \ge 2.$
    Given the sequence $(d_1, d_2, \ldots, d_t)$, we construct a caterpillar tree, consisting of a path with $t+1$ vertices $v_1,v_2,\ldots,v_t,v_{t+1}$ to which we attach $d_1+1, d_2, \ldots, d_t$ and $k$ leaves, respectively (see Figure~\ref{fig:f_v}). 
    As we let $k \to \infty$, the proportion of subtrees containing the right end $v_{t+1}$ of the path tends to $1$, so we only need to consider those subtrees. The number of subtrees that contain $v_i,v_{i+1},\ldots,v_{t+1}$, but not $v_{i-1}$, is $2^{d_i+d_{i+1}+\cdots+d_t+k}$ if $i > 1$, and there are $2^{1+d_1+d_2+\cdots+d_t+k}$ subtrees containing all $t+1$ vertices of the path. So we obtain, for the vertex $u = v_1$,
\begin{align*}
p(T,u) &\sim \frac{2^{1 + d_1+d_2+\cdots + d_k}}{2^{1 + d_1+d_2+\cdots + d_k} + \sum_{i=2}^{t+1} 2^{d_i+d_{i+1}+\cdots+d_t+k}} = \frac{2}{2 + \sum_{i=2}^{t+1} 2^{-d_1-d_2-\cdots-d_{i-1}}} \\
&= \frac{2}{2 + \sum_{i=2}^{t+1} 2^{-c_{i-1}}} = \frac{2}{2 + 2^{-c_1} + 2^{-c_2} + \cdots + 2^{-c_t}}.
\end{align*}
Since $t$ is arbitrary, this can be made arbitrarily close to $\frac{2}{2+x} = q$, proving that the values are dense in $[\frac23,1]$ for internal vertices. To show the statement for leaves, simply consider a leaf $w$ that is adjacent to $u$. Since exactly half the subtrees that contain $u$ also contain $w$, we have
$$p(T,w) \sim \frac12 p(T,u) \sim \frac{1}{2 + 2^{-c_1} + 2^{-c_2} + \cdots + 2^{-c_t}},$$
which can be made arbitrarily close to $\frac{q}{2} = \frac{1}{2+x}$. This proves that the set of possible values is dense in $[\frac13,\frac12]$ in this case.
\end{proof}

\begin{figure}[ht]
    \centering
      \begin{tikzpicture}[scale=0.6]
        \draw[dotted] (12,1) -- (2,1);

        \draw (2,1)--(6,1);
        \draw (10,1)--(12,1);
        
      \foreach \i in {2,4,6,10}
        {
        \draw[fill=black] (\i,1) circle (3pt);
        \foreach \j in {-0.6,-0.3,0.3,0.6}{
            \draw (\j+\i,2.5) -- (\i,1);
            \draw[fill=black] (\i+\j,2.5) circle (3pt);
        }
            \draw[dotted] (\i-0.3,2.5) -- (\i+0.3,2.5);
        }

        \node (w) at (0.9,2.5) {$w$} ;
        \node (u) at (2,0.5) {$u$} ;
        \node () at (2,3) {$d_1$} ;
        \node () at (4,3) {$d_2$} ;
        \node () at (6,3) {$d_3$} ;
        \node () at (10,3) {$d_t$} ;
        \node () at (14,1) {$k$} ;
        
        \foreach \i in {0,0.3,1.7,2}
        {
            \draw (13.5,\i) -- (12,1);
            \draw[fill=black] (13.5,\i) circle (3pt);
        }
        \draw[dotted] (13.5,0.3) -- (13.5,1.7);
        \draw[fill=black] (12,1) circle (3pt);
    \end{tikzpicture}
    \caption{The construction in the proof of Proposition~\ref{prop:pTv_dense}.}
    \label{fig:f_v}
\end{figure}

\bibliographystyle{abbrv}
\bibliography{bibliography}

\end{document}